\documentclass[12pt,a4paper,reqno]{amsart}
\usepackage{graphicx}
\usepackage{hyperref}
\usepackage[margin=3cm]{geometry}
\usepackage{mathrsfs}
\usepackage{overpic}
\usepackage[utf8]{inputenc}
\usepackage{amsmath}
\usepackage{amsfonts}
\usepackage{amssymb}
\usepackage{enumerate}
\usepackage{subcaption}
\usepackage{xcolor}
\usepackage{hyperref}

\newcommand{\R}{\ensuremath{\mathbb{R}}}

\renewcommand{\epsilon}{\varepsilon}
\renewcommand{\geq}{\geqslant}
\renewcommand{\leq}{\leqslant}

\newcommand*\Bell{\ensuremath{\boldsymbol\ell}}
\newcommand*\Balpha{\ensuremath{\boldsymbol\alpha}}
\newcommand*\Bbeta{\ensuremath{\boldsymbol\beta}}

\newtheorem{thm}{Theorem}[section]

\newtheorem{lem}[thm]{Lemma}
\newtheorem{prop}[thm]{Proposition}
\newtheorem{defi}[thm]{Definition}

\newtheorem{rem}[thm]{Remark}

\title[]
{Relativistic effects in the dynamics of a particle in a Coulomb field}

\author[R. Ortega \and D. Rojas]{Rafael Ortega$^1$ \and David Rojas$^2$}

\address{$^1$ Departamento de Matemática Aplicada, Universidad de Granada, 18071 Granada, Spain}
\email{rortega@ugr.es}

\address{$^2$ Departament d'Informàtica, Matemàtica Aplicada i Estadística, Universitat de Girona, 17003 Girona, Spain} \email{david.rojas@udg.edu}

\subjclass[2020]{34C15, 37C27, 70F15}

\keywords{Bertrand's property, relativistic oscillator, Coulomb's law, strong integrability}

\begin{document}

\begin{abstract}
We prove that Bertrand's property cannot occur in a special-relativistic scenario using the properties of the period function of planar centres. We also explore some integrability properties of the relativistic Coulomb problem and the asymptotic behavior of collision solutions.
\end{abstract}

\maketitle

\section{Introduction}\label{sec:introduccion}

The motion of a particle in the plane under the action of an attractive central force is one of the simplest examples of integrable Hamiltonian system. The energy and angular momentum are two independent first integrals. In two exceptional cases there is a third independent integral and the system becomes super-integrable (see \cite{MosZeh} for more details). As it is well known, these exceptional forces correspond to the harmonic oscillator and Coulomb's law\footnote{In the context of Gravitation theory this is just Newton's law}. The property of super-integrability is related to Bertrand's theorem on central forces. This result says that the exceptional cases mentioned above are the only central forces with the following property: all solutions close to circular motions are periodic. Incidentally we notice that in the case of Coulomb's law any of the two components of the Runge-Lenz vector can be taken as the third integral. This is a classical topic in the study of Kepler problem.

The above discusion is valid within the Newtonian framework but central forces are also meaningful in a special-relativistic scenario (see \cite{BosDamFel2021,BosDamFel2024,KumBha2011,Landau}). Indeed, both relativistic energy and angular momentum are still conserved and a new class of integrable Hamiltonian systems appears. The goal of this paper is to discuss some properties of these systems, particularly those in contrast with the Newtonian case. First we will consider a general attractive central force and we will prove that Bertrand's property cannot occur in the relativistic context. The proof of this result will employ tools from the theory of planar dynamical systems, in particular the properties of the period function of a centre (see \cite{Albouy,OrtRoj2019}). Due to its relevance in electro-dynamics, the case of Coulomb's law is of particular interest and the rest of the paper will be devoted to this case. Explicit formulas for the solutions were obtained in \cite{Landau} using a classical Hamilton-Jacobi approach. In \cite{MunPav2006} it was observed that it is also possible to solve the system using a generalized Runge-Lenz vector. Although this quantity is not conserved, its variation has nice properties and it leads to a strong property of integrability. We will also explore this direction. Finally, we will discuss the asymptotic behaviour of collision solutions. In the Newtonian framework this is a very classical topic leading to Sundman's estimates (see \cite{Sperling}). In particular it is well known that the velocity tends to infinity at the collision. This is impossible in the relativistic world and we will obtain precise estimates of a different nature. The main tool will be a partial regularization somehow inspired by the classical Levi-Civita change of variables.

\section{Statement of the results}

The motion of a relativistic particle of mass $m$ in the presence of a central potential $V$ is described by the Lagrangian
\begin{equation}\label{Lagrangian}
\mathcal L(\textbf{x},\dot{\textbf{x}}) = mc^2- m c^2 \sqrt{1-\frac{|\dot{\textbf{x}}|^2}{c^2}}-V(|\textbf{x}|),\ \ (\textbf{x},\dot {\textbf{x}})\in  (\R^2\setminus\{0\})\times B_c(0),
\end{equation}
where $B_c(0)$ denotes the open ball of radius $c$ centred at the origin. Here $c$ stands for the speed of light and we use the dot notation for time derivation. The additional constant term $mc^2$ may differ from classical references but we added it for the sake of simplicity in the forthcoming sections. 

The Euler-Lagrange equation associated to $\mathcal{L}$ is
\begin{equation}\label{EL}
m\frac{d}{dt}\left(\frac{\dot{\textbf{x}}}{\sqrt{1-\frac{|\dot{\textbf{x}}|^2}{c^2}}}\right) = - V'(|\textbf{x}|)\frac{\textbf{x}}{|\textbf{x}|}.
\end{equation}

Relativistic circular motions are of the type 
\[
\textbf{x}(t)=r_0e^{i\omega t} \text{ with }r_0>0 \text{ and }0<r_0\omega<c.
\] 
We will consider real analytic functions $V:\ ]0,+\infty[\rightarrow\mathbb{R}$ satisfying
\begin{equation}\label{dV-positive}
V'(r)>0 \text{ for each } r\in\ ]0,+\infty[.
\end{equation}
Then it is easy to prove that for each $r_0>0$ there exists a unique circular motion. Let us denote the corresponding frequency by $\omega=\Omega(r_0)$. In the phase space $(\mathbb{R}^2\setminus\{0\})\times B_c(0)$, the orbit associated to the circular motion is
\[
\mathcal{C}_{r_0}:=\{(r_0\xi, i\Omega(r_0)r_0\xi) : \xi\in\mathbb{S}^1\}.
\]
We will say that the potential $V(r)$ has the \emph{Bertrand} property if for every $r_0>0$ there exists a neighborhood $\mathcal{U}_{r_0}\subset\mathbb{C}\times\mathbb{C}$ of $\mathcal{C}_{r_0}$ such that all the solutions of~\eqref{EL} with initial conditions lying in $\mathcal{U}_{r_0}$ are periodic.

\begin{thm}
In the above conditions there is no central potential $V(r)$ with the Bertrand property.
\end{thm}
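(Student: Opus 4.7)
The plan is to reduce the two-degree-of-freedom problem to a one-parameter family of one-dimensional Hamiltonian systems and then use continuity to force strong rigidity on the potential $V$. In polar coordinates $(r,\theta)$, conservation of the relativistic angular momentum $L=p_\theta$ reduces~\eqref{EL} to the one-degree-of-freedom Hamiltonian system with
\[
H_L(r,p_r)=c\sqrt{m^2c^2+p_r^2+L^2/r^2}-mc^2+V(r),
\]
coupled with the quadrature $\dot\theta=\partial H_L/\partial L$. For each $r_0>0$ there is a unique $L=L(r_0)$ for which $(r_0,0)$ is an elliptic equilibrium of $H_L$; it corresponds to the circular orbit $\mathcal{C}_{r_0}$, and solutions of~\eqref{EL} close to $\mathcal{C}_{r_0}$ correspond to small radial oscillations about $(r_0,0)$.

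Such a solution is periodic if and only if the angular increment $\Delta\theta$ accumulated over one radial period is a rational multiple of $2\pi$. The Bertrand hypothesis forces $\Delta\theta/(2\pi)\in\Q$ on an open set of energies and angular momenta, so by continuity together with the nowhere-density of $\Q$, $\Delta\theta$ must be locally constant. Letting the oscillation amplitude tend to zero, the linearized apsidal ratio
\[
\rho(r_0):=\frac{\Omega(r_0)}{\omega_r(r_0)},\qquad \omega_r(r_0):=\sqrt{\partial_r^2H_L\cdot\partial_{p_r}^2H_L}\,\Big|_{(r_0,0)},
\]
must be a constant rational number as $r_0$ varies in $]0,+\infty[$. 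A direct Hessian computation at $(r_0,0)$, together with the circular-motion identity $\Omega(r_0)^2=cV'(r_0)/(r_0\gamma_0)$ where $\gamma_0^2=m^2c^2+L(r_0)^2/r_0^2$, yields the explicit formula
\[
\rho(r_0)^{-2}=3-u(r_0)+\frac{r_0V''(r_0)}{V'(r_0)},\qquad u(r_0):=\frac{r_0^2\Omega(r_0)^2}{c^2}\in(0,1),
\]
in which the relativistic correction $-u(r_0)$ is absent from the Newtonian analogue.

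This formula already eliminates every power-law potential: for $V(r)=kr^\alpha$ the quantity $rV''/V'$ is the constant $\alpha-1$, whereas $u(r_0)$ varies non-trivially through $u/\sqrt{1-u}=k\alpha r^\alpha/(mc^2)$, so $\rho^{-2}$ cannot be constant. In particular, both the relativistic oscillator and the relativistic Kepler potentials fail the condition at the linearized level, in sharp contrast with the Newtonian theorem. However, a handful of non-power-law potentials do satisfy the linearized condition, the borderline case being the logarithmic one $V(r)=K\log r$, for which both $u$ and $rV''/V'$ are constant.

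To dispose of these remaining candidates, the plan is to push the analysis one order further and compute the twist of the elliptic equilibrium $(r_0,0)$ of $H_L$ — equivalently the derivative $\partial_h\Delta\theta$ evaluated at the circular energy — using the period-function techniques for planar centres developed in~\cite{OrtRoj2019}. The main technical obstacle lies precisely in this last step: showing that the twist coefficient never vanishes under the sole assumption $V'>0$ requires a careful expansion of the relativistic kinetic term, since this coefficient cannot be read off from its Newtonian counterpart (which vanishes exactly for the two classical Bertrand potentials). The strict convexity of the map $p\mapsto c\sqrt{m^2c^2+p^2}$ should keep the twist strictly non-zero, making $\Delta\theta(L,h)$ a strictly monotone function of $h$ and thereby contradicting its local constancy.
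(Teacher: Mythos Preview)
Your outline follows the same template as the paper's proof --- reduce to a one-parameter family of planar centres and study the period function near the circular equilibrium --- and your first-order (linearized) analysis is essentially the paper's equation~\eqref{recursividad} written in the $(r,p_r)$ variables instead of the Clairaut variables $(\rho,\eta)$.  So far so good: that step indeed leaves a one-parameter family of candidate potentials (the analogue of the ODE $W''=-\tfrac{a}{\rho}W'-\tfrac{1}{c^2\ell^2\rho}(W')^3$ in the paper), of which the logarithmic potential is a genuine example.

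The gap is in the second-order step.  You do not compute the twist; you only assert that ``strict convexity of $p\mapsto c\sqrt{m^2c^2+p^2}$ should keep the twist strictly non-zero''.  This heuristic cannot be right as stated: in the Newtonian problem the kinetic term $p\mapsto p^2/(2m)$ is also strictly convex, yet the twist vanishes identically for the two Bertrand potentials.  So convexity of the kinetic part is not the mechanism, and there is no a priori reason to expect the relativistic twist to be sign-definite across the whole family of first-order candidates.  Nor have you actually parametrized that family, so it is not even clear what ``the remaining candidates'' are when you propose to check them.

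What the paper does at this point is substantively different from ``show the twist is non-zero''.  It computes the second period constant explicitly (after expressing $W'',W''',W^{(4)}$ and $\mathscr{L}'$ in terms of $W'$ and $\mathscr{L}$ via the first-order relation) and finds it proportional to $Q(x)$ with $x=\rho_0^2\mathscr{L}(\rho_0)^2/(c^2m^2)$, where $Q$ is a degree-five polynomial with coefficients depending only on $a=(2\pi/\Theta)^2-1$.  The argument is then \emph{not} that $Q$ never vanishes; rather, vanishing of the twist forces $x$ to be a constant root $K(a)$ of $Q$, and feeding this back into the first-order relation pins down $K(a)=(1-a)/a$ and finally $1+6a-9a^2+6a^3=0$ with $0<a<1$, which is impossible.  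In other words, the contradiction comes from the \emph{interaction} between the first- and second-order conditions, not from non-vanishing of the second-order term alone.  Your proposal is missing precisely this computation and this algebraic endgame; without them, the proof does not close.
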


The proof will be presented in Section~\ref{sec:clairaut}.

Due to the conservation of energy and angular momentum it is not hard to prove that circular motions are orbitally stable in the phase space $(\mathbb{R}^2\setminus\{0\})\times B_c(0)$. However these motions are not Lyapunov stable unless the Bertrand property holds. For this reason the previous result answers in the negative the question posed in~\cite{KumBha2011}.

In the case of Coulomb's law, the potential function takes the form
\[
V(|\textbf{x}|)=-\frac{k}{|\textbf{x}|}, \ k>0.
\]
The rest of the paper is concerned with this particular potential. Taking the position and its conjugated momentum as new variables $(\textbf{q},\textbf{p})\in\Omega:=\R^2\setminus\{0\}\times\R^2$,
\begin{equation}\label{variables}
\textbf{q}=\textbf{x},\ \ \textbf{p}=\frac{\partial \mathcal L}{\partial \dot{\textbf{x}}} = m \frac{\dot{\textbf{x}}}{\sqrt{1-\frac{|\dot{\textbf{x}}|^2}{c^2}}},
\end{equation}
we can deduce from the Lagrangian the expression of the Hamiltonian
\begin{equation}\label{energy}
H(\textbf{q},\textbf{p}) =  \left<\dot{\textbf{q}},\textbf{p}\right>-\mathcal L(\textbf{q},\textbf{p}) = c^2\sqrt{m^2+\frac{|\textbf{p}|^2}{c^2}}-\frac{k}{|\textbf{q}|}-mc^2.
\end{equation}
Here and throughout the paper we use the notation $\left<\cdot,\cdot\right>$ for the scalar product of vectors. The energy $H$ is a constant of motion. A second constant of motion is given by the third component of the angular momentum
\begin{equation}\label{angular_momentum}
L(\textbf{q},\textbf{p}) = \left<\textbf{q},\textbf{Jp}\right>=q_1p_2 - q_2p_1,
\end{equation}
with $\textbf{J}=\left(\begin{smallmatrix}
0 & 1\\
-1 & 0
\end{smallmatrix}\right)$. It is an standard computation to show that $H$ and $L$ are first integrals in involution.

In the non-relativistic scenario, an energy-momentum diagram is used to show the possible values of the constants of motion associated to solutions of the Coulomb's law. The first result is a relativistic version of the previous (see Figure~\ref{fig:diagram}). For any fixed $(\ell,h)\in\R^2$ let us consider the level set 
\[
\mathcal N_{(\ell,h)}=\{ (\mathbf{q},\mathbf{p})\in \Omega : H(\textbf{q},\textbf{p})=h, L(\textbf{q},\textbf{p})=\ell\}.
\]
We also define 
\[
\sigma:=\sqrt{1-\frac{k^2}{\ell^2c^2}}.
\]

\begin{figure}
    \centering
    \includegraphics[scale=1.2]{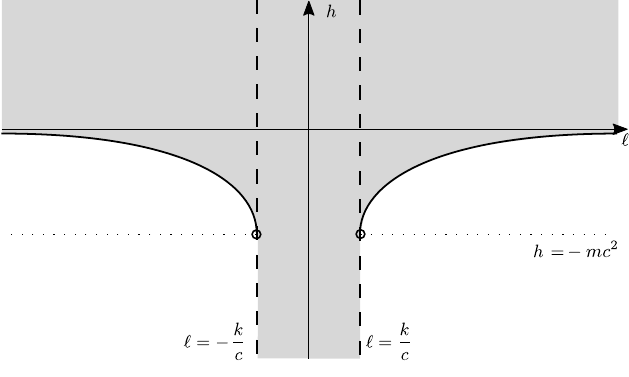}
    \caption{\label{fig:diagram}Energy-Momentum diagram. The shadowed area corresponds to the points $(\ell,h)\in\mathbb{R}^2$ such that $\mathcal N_{(\ell,h)}\neq \emptyset$. The boundary is closed along the curved sections and open along the vertical half-lines starting from (and including) the points $(\pm k/c,-mc^2)$.}
\end{figure}

\begin{prop}\label{integrabilidad}
$\mathcal N_{(\ell,h)}\neq\emptyset$ if and only if 
\[
(\ell,h)\in \{\ell^2c^2-k^2<0 \}\cup \{ \ell^2c^2-k^2\geq 0 \text{ and } h+mc^2(1-\sigma) \geq 0 \}\setminus\{(\pm k/c,-mc^2)\}
\]
Moreover, the points
$\{\ell^2c^2-k^2>0, h+mc^2(1-\sigma)=0 \}$ correspond to circular motions.
\end{prop}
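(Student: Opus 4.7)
The plan is to reduce the existence of $(\textbf{q},\textbf{p})\in\mathcal{N}_{(\ell,h)}$ to a one-dimensional radial problem governed by an effective potential, and then analyze that potential in three regimes according to the sign of $\ell^2c^2-k^2$.

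First, I would introduce polar coordinates $\textbf{q}=re^{i\theta}$ (with $r>0$) and split $\textbf{p}$ into its radial and tangential components $p_r,p_\theta$, so that $L=rp_\theta$ and $|\textbf{p}|^2=p_r^2+p_\theta^2$. Fixing $L=\ell$ forces $p_\theta=\ell/r$, and the condition $H=h$ becomes
\[
c\sqrt{c^2m^2+p_r^2+\frac{\ell^2}{r^2}}=h+mc^2+\frac{k}{r}.
\]
Since the left-hand side is $\geq mc^2>0$, a phase point realizing the prescribed values of $H$ and $L$ at radius $r$ exists if and only if $h+mc^2+k/r\geq c\sqrt{c^2m^2+\ell^2/r^2}$, equivalently $h\geq U_{\textrm{eff}}(r)$, where
\[
U_{\textrm{eff}}(r):=c\sqrt{c^2m^2+\frac{\ell^2}{r^2}}-\frac{k}{r}-mc^2.
\]
Thus $\mathcal{N}_{(\ell,h)}\neq\emptyset$ iff $\inf_{r>0}U_{\textrm{eff}}(r)\leq h$; moreover, whenever the infimum is attained, the corresponding phase point has $p_r=0$ and describes a circular motion at that radius.

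Next I would study $U_{\textrm{eff}}$. In every case $U_{\textrm{eff}}(r)\to 0$ as $r\to\infty$, while as $r\to 0^+$ the leading term is $(c|\ell|-k)/r$. When $k^2>\ell^2 c^2$ this gives $U_{\textrm{eff}}\to-\infty$ and existence is unconditional. When $k^2<\ell^2 c^2$ instead, $U_{\textrm{eff}}\to+\infty$ so a finite global minimum must be attained; solving $U'_{\textrm{eff}}(r)=0$ yields the unique critical radius
\[
r_*=\frac{|\ell|\sqrt{\ell^2c^2-k^2}}{ckm},
\]
and substituting back while using the critical-point relation to eliminate the square root produces the clean formula $U_{\textrm{eff}}(r_*)=-mc^2(1-\sigma)$. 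Therefore existence is equivalent to $h+mc^2(1-\sigma)\geq 0$, with equality characterizing precisely the circular orbits at radius $r_*$.

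Finally, the borderline case $\ell^2c^2=k^2$ (where $\sigma=0$) requires separate attention, since the $1/r$ contributions in $U_{\textrm{eff}}$ cancel and no interior critical point exists. A direct expansion shows that in this case $U_{\textrm{eff}}$ is strictly increasing on $(0,\infty)$ with non-attained limits $-mc^2$ at $0^+$ and $0$ at $\infty$. Hence $\mathcal{N}_{(\pm k/c,h)}\neq\emptyset$ iff $h>-mc^2$, which matches the stated description once the singular points $(\pm k/c,-mc^2)$ are removed; it also explains both the open vertical half-lines in Figure~\ref{fig:diagram} and the strict inequality $\ell^2c^2-k^2>0$ in the characterization of circular motions. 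This edge case, where the infimum of $U_{\textrm{eff}}$ is not attained and hence no circular orbit exists, is the main technical subtlety I would expect in the argument.
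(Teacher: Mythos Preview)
Your argument is correct and follows essentially the same strategy as the paper's proof: reduce the existence question to a one-variable optimization and analyze the minimum according to the sign of $\ell^2c^2-k^2$. The only difference is a change of variable: you minimize the effective potential $U_{\mathrm{eff}}(r)$ over the radius $r$, whereas the paper minimizes the function $\psi_\ell(x)=c^2\sqrt{m^2+x^2/c^2}-\frac{k}{|\ell|}x-mc^2$ over $x=|\textbf{p}|$; the two are identified via $x=|\ell|/r$, and your effective-potential formulation makes the treatment of the borderline case $|\ell|=k/c$ and the link to circular motions slightly more transparent.
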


The allowed energy and angular momentum are the values in the shadowed area depicted in Figure~\ref{fig:diagram}, where the system is integrable. Notice that the boundary in the solid line corresponds to circular motion. We refer to Section~\ref{sec:integrability} for more details.

Energy and angular momentum are two first integrals of the system. Let us denote by
\[
\mathscr{E}:=\{(h,\ell)\in\mathbb{R}^2: \ell^2c^2-k^2>0 \text{ and } -mc^2(1-\sigma)<h<0\},
\]
the region on the Energy-Momentum diagram with non-collision bounded motion. Next result shows there are no more continuous independent first integrals in $\mathscr{E}$.

Let us consider the open subset of the phase space
\[
\Omega=\{(\textbf{q},\textbf{p})\in(\mathbb{R}^2\setminus\{0\})\times\mathbb{R}^2 : (H(\textbf{q},\textbf{p}),L(\textbf{q},\textbf{p}))\in\mathscr{E}\}.
\]
We observe that $\Omega$ is invariant under the Hamiltonian flow. Moreover, all solutions $(\textbf{q}(t),\textbf{p}(t))$ lying in this set are globally defined.

A continuous function $F:\Omega\rightarrow\mathbb{R}$ will be called a first integral if
\[
F(\textbf{q}(t),\textbf{p}(t)) = F(\textbf{q}(0),\textbf{p}(0)), \ t\in\mathbb{R},
\]
for all solutions $(\textbf{q}(t),\textbf{p}(t))$ in $\Omega$.

\begin{prop}\label{prop:integralprimera}
Every continuous first integral $F:\Omega\rightarrow\mathbb{R}$ is functionally dependent with $H$ and $L$; that is, there exists a continuous function $\Psi:\mathscr{E}\rightarrow\mathbb{R}$ such that $F=\Psi\circ(H,L)$.
\end{prop}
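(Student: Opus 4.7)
The plan is to combine the Arnold-Liouville structure of the reduced dynamics on $\mathscr{E}$ with a density argument based on the non-constancy of the rotation number. First I would verify that for every $(\ell,h)\in\mathscr{E}$ the joint level set $\mathcal{N}_{(\ell,h)}$ is a smoothly embedded $2$-torus: compactness follows from the defining inequalities of $\mathscr{E}$, which through the effective radial potential confine $|\mathbf q|$ to a compact interval $[r_{\min},r_{\max}]\subset\ ]0,+\infty[$ (one uses $h<0$ for the upper turning point and $\ell^2 c^2>k^2$ for the lower), while $|\mathbf p|$ is controlled by $H$. The differentials $dH$ and $dL$ are linearly independent on $\mathcal{N}_{(\ell,h)}$ because their dependence locus consists precisely of circular motions, which are excluded by the strict inequality $h>-mc^2(1-\sigma)$ built into the definition of $\mathscr{E}$. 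By the Arnold-Liouville theorem the flow of $H$ on $\mathcal{N}_{(\ell,h)}$ is then smoothly conjugate to a linear flow on $\mathbb{T}^2$.

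Next I would introduce the rotation number $\rho(\ell,h)$, defined as the angle swept during one radial period divided by $2\pi$. Writing \eqref{EL} in polar coordinates and using the conservation of $H$ and $L$ yields an explicit quadrature for $\rho(\ell,h)$ as an integral between the turning points, which makes it a real analytic function on the connected open set $\mathscr{E}$. The crucial point is to prove that $\rho$ is non-constant. In the Newtonian case one has $\rho\equiv 1$ (all bounded orbits are ellipses), but the relativistic correction destroys this resonance; non-constancy can be established either by specializing to $V(r)=-k/r$ the Clairaut-type analysis of the period function carried out in Section~\ref{sec:clairaut} for the proof of Theorem~2.1, or by directly comparing the limit of $\rho$ at the inner boundary $h\to -mc^2(1-\sigma)$ (where $\rho$ is determined by the second variation of the effective potential at the circular orbit) with its behaviour at the outer boundary $h\to 0^-$.

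Once $\rho$ is known to be real analytic and non-constant on the connected open set $\mathscr{E}$, each preimage $\rho^{-1}(q)$ with $q\in\mathbb{Q}$ is a proper analytic subvariety, hence $\mathscr{E}_{\mathrm{irr}}:=\rho^{-1}(\mathbb{R}\setminus\mathbb{Q})$ is an open dense subset of $\mathscr{E}$ whose tori carry dense individual orbits. Consequently any continuous first integral $F$ is constant on each $\mathcal{N}_{(\ell,h)}$ with $(\ell,h)\in\mathscr{E}_{\mathrm{irr}}$; denote this value by $\Psi(\ell,h)$. To propagate the statement to arbitrary $(\ell_0,h_0)\in\mathscr{E}$, take $P,Q\in\mathcal{N}_{(\ell_0,h_0)}$ and use that $(H,L):\Omega\to\mathscr{E}$ is a submersion to construct sequences $P_n\to P$ and $Q_n\to Q$ with $(H,L)(P_n)=(H,L)(Q_n)=(\ell_n,h_n)\in\mathscr{E}_{\mathrm{irr}}$; then $F(P_n)=F(Q_n)=\Psi(\ell_n,h_n)$, and passing to the limit yields $F(P)=F(Q)$. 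Setting $\Psi(\ell_0,h_0)$ equal to this common limit extends $\Psi$ continuously to all of $\mathscr{E}$ and gives $F=\Psi\circ(H,L)$.

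The main obstacle is the non-constancy of $\rho$: integrability alone does not forbid an additional independent first integral, and it is precisely the relativistic precession of the orbits, encoded in the variation of $\rho$ across $\mathscr{E}$, that rules out the Bertrand-type degeneracy responsible for the Runge-Lenz integral in the Newtonian case. Everything else is a standard application of Arnold-Liouville combined with a density-plus-continuity argument.
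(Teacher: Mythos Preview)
Your approach is correct and follows the same overall strategy as the paper's own argument in Section~\ref{sec:3integral}: identify the level sets $\mathcal{N}_{(\ell,h)}$ as invariant tori, show that the rotation number is non-constant so that tori with dense orbits form a dense subset of $\mathscr{E}$, and then pass to the limit by continuity of $F$.

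The one substantive difference is in how non-constancy is established. The paper bypasses your proposed boundary analysis and analytic-subvariety argument entirely: for the Coulomb potential the angular period $\Theta$ of the radial oscillation has a closed-form expression depending only on $\ell$ (it is governed by $\sigma=\sqrt{1-k^{2}/(\ell^{2}c^{2})}$, as one can read off either from Landau's explicit solution or from the linear system of Proposition~\ref{prop:runge}). Since $\Theta$ is a non-constant analytic function of the single variable $\ell$, the set $\{\ell:\Theta(\ell)\in\pi\mathbb{Q}\}$ is countable and its complement is dense; the paper then concludes in two lines. Your Arnold--Liouville setup and the careful sequence-based extension to the rational tori are sound but more machinery than this particular case requires.
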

The previous result is, in fact, valid for any relativistic attractive force with circular motion.

The relativistic Runge-Lenz vector is defined by
\begin{equation}\label{runge}
\textbf{R}(\textbf{q},\textbf{p}) = -mk\gamma(\textbf{p}) \frac{\textbf{q}}{|\textbf{q}|}+\textbf{p}\times (\textbf{q}\times\textbf{p}),
\end{equation}
where 
\begin{equation}\label{gamma}
\gamma(\textbf{p})=\frac{1}{cm}\sqrt{c^2m^2+|\textbf{p}|^2}
\end{equation}
is the Lorentz factor. Here and throughout the paper we use the notation $\textbf{u}\times\textbf{v}$ to denote the cross product of $\textbf{u}$ with $\textbf{v}$. In the previous definition we are abusing the notation of the cross product allowing to act on vectors in $\mathbb{R}^2$. Of course, the cross product is performed with a three-dimensional vector with null third component. Notice that the vector $\textbf{R}$ is indeed contained in the same plane as $\textbf{q}$ and $\textbf{p}$, $\textbf{R}^2\times\{0\}$. Let us consider the mobile reference system $\{\Balpha(t),\Bbeta(t)\}$ given by
\[
\Balpha = \frac{\textbf{q}}{|\textbf{q}|},\ \ \Bbeta = \textbf{J}\Balpha.
\]
Using the argument function of $\Balpha$, $\theta=\theta(t)$, we have $\Balpha = (\cos\theta,\sin\theta)$ and so $\dot{\Balpha}=\dot{\theta}\Bbeta$, $\dot{\Bbeta}=-\dot{\theta}\Balpha$. In this reference system we express the relativistic Runge-Lenz vector as
\[
\textbf{R} = R_{\alpha}\Balpha + R_{\beta}\Bbeta,
\]
with $R_{\alpha} = \left<\textbf{R},\Balpha\right>$ and $R_{\beta} = \left<\textbf{R},\Bbeta\right>$.

\begin{prop}\label{prop:runge}
The relativistic Runge-Lenz vector satisfies the linear differential equation
\begin{equation}\label{edoR}
\begin{cases}
    \frac{d}{d\theta}R_{\alpha}=\sigma^2 R_{\beta},\\
    \frac{d}{d\theta}R_{\beta}=-R_{\alpha},
\end{cases}
\end{equation}
where $\sigma$ is a constant depending on the values of the energy and momentum.
\end{prop}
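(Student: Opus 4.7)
The plan is to express $\mathbf{R}$ in the moving frame $\{\boldsymbol{\alpha},\boldsymbol{\beta}\}$, differentiate its components in time along a trajectory, and then convert to $\theta$-derivatives via the relativistic Kepler-area identity $\dot{\theta}=L/(mr^{2}\gamma)$ coming from $\mathbf{p}=m\gamma\dot{\mathbf{q}}$.

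First I would decompose $\mathbf{R}$. Writing $r=|\mathbf{q}|$, $\mathbf{q}=r\boldsymbol{\alpha}$ and $\mathbf{p}=p_{\alpha}\boldsymbol{\alpha}+p_{\beta}\boldsymbol{\beta}$, the angular momentum yields the relation $L=rp_{\beta}$, and the vector identity $\mathbf{p}\times(\mathbf{q}\times\mathbf{p})=|\mathbf{p}|^{2}\mathbf{q}-(\mathbf{p}\cdot\mathbf{q})\mathbf{p}$ collapses to $(L^{2}/r)\boldsymbol{\alpha}-Lp_{\alpha}\boldsymbol{\beta}$ after using $\mathbf{p}\cdot\mathbf{q}=rp_{\alpha}$. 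Therefore
\[
R_{\alpha}=-mk\gamma+\frac{L^{2}}{r},\qquad R_{\beta}=-Lp_{\alpha}.
\]

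Next I would differentiate in time using the Hamiltonian flow. From $\dot{\mathbf{q}}=\mathbf{p}/(m\gamma)$ one has $\dot{r}=p_{\alpha}/(m\gamma)$, while the Euler-Lagrange equation gives $\dot{\mathbf{p}}=-(k/r^{2})\boldsymbol{\alpha}$. The key auxiliary identity is the relativistic work-energy relation $\dot{\gamma}=(\mathbf{p}\cdot\dot{\mathbf{p}})/(m^{2}c^{2}\gamma)=-kp_{\alpha}/(r^{2}m^{2}c^{2}\gamma)$, obtained by differentiating $m^{2}c^{2}\gamma^{2}=m^{2}c^{2}+|\mathbf{p}|^{2}$. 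Substituting in $\dot{R}_{\alpha}=-mk\dot{\gamma}-L^{2}\dot{r}/r^{2}$ produces the crucial factorisation
\[
\dot{R}_{\alpha}=\frac{p_{\alpha}}{mr^{2}\gamma}\left(\frac{k^{2}}{c^{2}}-L^{2}\right).
\]
For $R_{\beta}$, decomposing $\dot{\mathbf{p}}$ in the moving frame (via $\dot{\boldsymbol{\alpha}}=\dot{\theta}\boldsymbol{\beta}$, $\dot{\boldsymbol{\beta}}=-\dot{\theta}\boldsymbol{\alpha}$) gives $\dot{p}_{\alpha}=p_{\beta}\dot{\theta}-k/r^{2}=L^{2}/(mr^{3}\gamma)-k/r^{2}$, hence $\dot{R}_{\beta}=-L\dot{p}_{\alpha}=-L^{3}/(mr^{3}\gamma)+Lk/r^{2}$.

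Finally I would divide both time derivatives by $\dot{\theta}=L/(mr^{2}\gamma)$. Using $p_{\alpha}=-R_{\beta}/L$, the first quotient becomes $(1-k^{2}/(L^{2}c^{2}))R_{\beta}=\sigma^{2}R_{\beta}$, with $\sigma$ depending only on the conserved value $\ell$ of $L$. The second quotient telescopes to $-(L^{2}/r-mk\gamma)=-R_{\alpha}$. The only delicate step is the simultaneous appearance of $k^{2}/c^{2}$ and $L^{2}$ in $\dot{R}_{\alpha}$, which is precisely what the work-energy identity for $\dot{\gamma}$ delivers; everything else is routine algebra in the polar frame.
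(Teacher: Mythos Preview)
Your argument is correct. Both you and the paper work in the moving frame $\{\boldsymbol{\alpha},\boldsymbol{\beta}\}$ and both compute $R_{\alpha}=-mk\gamma+\ell^{2}/r$, but the route to the closed linear system differs. The paper first establishes the vector identity $\dot{\mathbf{R}}=-mk\,\dot{\gamma}\,\boldsymbol{\alpha}$, which immediately gives the intermediate system $R_{\alpha}'-R_{\beta}=-mk\gamma'$, $R_{\beta}'+R_{\alpha}=0$; it then closes the loop by invoking \emph{conservation of energy} $h=(\gamma-1)mc^{2}-k/r$ to express $\gamma$ as an affine function of $R_{\alpha}$, whence $\gamma'=\tfrac{k}{m(\ell^{2}c^{2}-k^{2})}R_{\alpha}'$ and the $\sigma^{2}$ appears. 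You instead compute $R_{\beta}=-Lp_{\alpha}$ explicitly, differentiate both components directly in time, and obtain the factor $k^{2}/c^{2}-L^{2}$ from the \emph{work-energy identity} $\dot{\gamma}=-kp_{\alpha}/(m^{2}c^{2}r^{2}\gamma)$, bypassing the Hamiltonian $H$ altogether. Your approach is more hands-on and self-contained (no appeal to the conserved energy), while the paper's is more structural: the single formula $\dot{\mathbf{R}}=-mk\dot{\gamma}\boldsymbol{\alpha}$ explains at once why $\mathbf{R}$ varies only radially and why the second equation $R_{\beta}'=-R_{\alpha}$ drops out without computation.
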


According to Proposition~\ref{prop:integralprimera} the relativistic Coulomb problem is not super-integrable. However the above result shows that this system enjoys a property of strong integrability that can be described as follows. We are in the presence of an integrable system with two degrees of freedom such that there exists a function $R_{\alpha}$ on the phase space with the properties below,
\begin{itemize}
\item $R_{\alpha}$ is independent with the constants of motion ($\nabla R_{\alpha}$, $\nabla H$ and $\nabla L$ are linearly independent),
\item $R_{\alpha}$ satisfies a linear differential equation of constant coefficients (for each solution $(\textbf{q}(t),\textbf{p}(t))$, the function $t\mapsto R_{\alpha}(\textbf{q}(t),\textbf{p}(t))$ satisfies a linear differential equation. After a change in the independent variable, $\theta=\theta(t)$, this equation has constant coefficients.)
\end{itemize}

This property of strong integrability has been employed in \cite{MunPav2006} to find explicit equations of the orbits. Perhaps it could be of interest to analyze this property and to describe the potentials enjoying it. This refers to both the Newtonian and Relativistic frameworks.

The final result in this paper is an asymptotic description of both radius and argument of the trajectory as the trajectory approach the collision.

\begin{prop}\label{prop:asymptotic}
    Let $(\textbf{q},\textbf{p})\in\mathcal{N}_{(\ell,h)}$ with $\sigma^2<0$ and let $\textbf{q}(t)=r(t)e^{i\theta(t)}$. Set the collision time at $t=0$ and assume $t\geq 0$. Then the following asymptotics hold for $t$ small:
    \begin{equation}
        \begin{cases}
            r(t) = \dfrac{c^2 w_{10}}{k}t+o(t),\\
            \theta(t) = \theta_0 + \dfrac{\ell}{w_{10}}\ln(t)+o(|\ln(t)|),
        \end{cases}
    \end{equation}
    with $\lim_{t\rightarrow 0}\left<\textbf{q}(t),\textbf{p}(t)\right>=w_{10}\neq 0$ and $\theta_0\in\mathbb{R}$.
\end{prop}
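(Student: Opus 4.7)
\emph{Approach and key steps.} The plan is to reduce the two-dimensional dynamics to scalar equations in $r$ and $\theta$ using the conservation of $H$ and $L$, to read off the limit $w_{10}$ from an algebraic identity, and then to integrate the resulting one-variable ODEs.

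Decompose the momentum in the moving frame as $\mathbf{p}=p_\alpha\Balpha+p_\beta\Bbeta$. Conservation of angular momentum gives $p_\beta=-\ell/r$, and rewriting the energy integral as $c\sqrt{c^{2}m^{2}+|\mathbf p|^{2}}=E+k/r$ with $E:=h+mc^{2}$, squaring, and using $p_\alpha^{2}=|\mathbf p|^{2}-p_\beta^{2}$, produces an algebraic expression for $w_{1}(t):=\langle\mathbf q(t),\mathbf p(t)\rangle=rp_\alpha$:
\[
c^{2}w_{1}^{2}=(Er+k)^{2}-c^{4}m^{2}r^{2}-c^{2}\ell^{2}.
\]
As $r\to 0$ the right-hand side tends to $k^{2}-c^{2}\ell^{2}$, which is strictly positive precisely because $\sigma^{2}<0$. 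Hence $w_{1}(t)$ is bounded away from $0$ for small $t>0$ and, being continuous with sign prescribed by $\dot r>0$, admits a nonzero limit $w_{10}$.

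A direct Hamilton computation combined with $m\gamma=(Er+k)/(c^{2}r)$ (from the rewritten energy relation) then yields
\[
\dot r=\frac{c^{2}\,w_{1}}{Er+k},\qquad \dot\theta=\frac{c^{2}\,\ell}{r(Er+k)}.
\]
The first formula embodies the Levi-Civita--style partial regularization announced in the introduction: although $|\mathbf p|\to\infty$ at collision, $\dot r$ extends continuously to the singular boundary with limit $c^{2}w_{10}/k$, so $r$ itself is a regular coordinate near the collision. Integrating delivers $r(t)=(c^{2}w_{10}/k)t+o(t)$ at once.

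For the angular variable, substitute this expansion into the denominator: $r(Er+k)=c^{2}w_{10}\,t\,(1+\eta(t))$ with $\eta(t)=o(1)$. Thus $\dot\theta(t)=\ell/(w_{10}\,t)+R(t)$ with $|R(t)|\le C|\eta(t)|/t$ near the origin. Integrating from a fixed reference time $t_{1}>0$ down to $t$ produces the main term $(\ell/w_{10})\ln t$ plus an error
\[
\Bigl|\int_{t}^{t_{1}}R(s)\,ds\Bigr|\le C\sup_{s\in[t,t_{1}]}|\eta(s)|\cdot|\ln t-\ln t_{1}|,
\]
which is $o(|\ln t|)$ as $t\to 0^{+}$. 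Absorbing the constants into $\theta_0$ yields the claimed expansion. The only delicate point is this last estimate; all other steps are algebraic manipulations of the first integrals $H$ and $L$ together with the straightforward integration of a scalar ODE whose right-hand side extends continuously to the endpoint.
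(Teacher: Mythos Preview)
Your argument is correct and follows essentially the paper's route: reduce to scalar ODEs for $r$ and $\theta$ using the first integrals, observe that $\dot r$ extends continuously to $r=0$ with limit $c^{2}w_{10}/k$, and integrate. The paper packages the same computation as a change of variables $(\mathbf q,\mathbf p)\mapsto(\mathbf q,\mathbf w)$ with $w_1=\langle\mathbf q,\mathbf p\rangle$, $w_2=\ell$, reads off $\dot w_1=h+mc^{2}+o(1)$ from the transformed flow, and establishes $w_{10}\neq 0$ by contradiction (if $w_{10}=0$ then $|\mathbf w(t)|\to|\ell|$, while the energy relation forces $|\mathbf w(t)|\to k/c$, contradicting $|\ell|<k/c$). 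Your algebraic identity $c^{2}w_1^{2}=(Er+k)^{2}-c^{4}m^{2}r^{2}-c^{2}\ell^{2}$ reaches $w_{10}^{2}=(k^{2}-c^{2}\ell^{2})/c^{2}>0$ in one stroke, which is a tidy shortcut.

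One small technical fix is needed in the error estimate for $\theta$. The displayed bound
\[
\Bigl|\int_{t}^{t_{1}}R(s)\,ds\Bigr|\le C\sup_{s\in[t,t_{1}]}|\eta(s)|\cdot|\ln t-\ln t_{1}|
\]
does not give $o(|\ln t|)$ as written: the supremum is taken over an interval whose right endpoint $t_{1}$ is fixed, so $\sup_{[t,t_1]}|\eta|$ need not tend to zero as $t\to 0^{+}$. The standard repair is to split the integral at some $\delta\in(0,t_1)$ chosen so that $|\eta|<\epsilon$ on $(0,\delta]$: the contribution over $[\delta,t_1]$ is a constant independent of $t$, while the contribution over $[t,\delta]$ is bounded by $C\epsilon\,|\ln t|$. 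Since $\epsilon$ is arbitrary, the total is $o(|\ln t|)$.
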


\section{Relativistic oscillator and Bertrand's problem}\label{sec:clairaut}

Identifying the plane of motion with $\mathbb{C}$, by writing $\textbf{x}(t)=r(t)e^{i\theta(t)}$, the equations of motion described by the Lagrangian can be written as
\begin{equation}\label{relativistic-motion}
\begin{cases}
m\dfrac{d}{dt}\left(\gamma\dot{r}\right) - \dfrac{\ell^2}{m\gamma r^3} = -V'(r),\\
\ell = m\gamma r^2\dot{\theta},
\end{cases}
\end{equation}
where $\gamma^{-1}=\sqrt{1-\frac{\dot{r}^2+r^2\dot{\theta}^2}{c^2}}$ and $\ell$ is the angular momentum, which is conserved.

In this case the kinetic energy and the angular momentum are constant,
\begin{equation}\label{help}
\gamma = \frac{1}{\sqrt{1-\frac{r_0^2\omega^2}{c^2}}}>1, \ \ell = m\gamma r_0^2\omega>0.
\end{equation}
For each $r_0>0$ the equation~\eqref{relativistic-motion} has a unique circular motion satisfying
\begin{equation}\label{dV-omega}
V'(r_0)=\frac{\ell^2}{m\gamma r_0^3}=m\gamma r_0\omega^2 = \frac{mr_0\omega^2}{\sqrt{1-\frac{r_0^2\omega^2}{c^2}}}.
\end{equation}

The corresponding angular velocity will be denoted by $\omega=\Omega(r_0)$ or, sometimes, $\omega=\Omega(r_0;V)$. In view of~\eqref{help} we also have the function $\ell=L(r_0;V)$ and $\gamma=\Gamma(r_0;V)$.

\begin{lem}\label{lema:momento-parametro}
The function $L(\cdot;V)$ is constant if and only if
\begin{equation}\label{rar}
V'(r)=\frac{ck}{r^2\sqrt{k+c^2m^2r^2}}
\end{equation}
for some constant $k>0$. In this case, $L^2=k$.
\end{lem}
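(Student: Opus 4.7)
The idea is to derive a universal formula expressing $V'(r_0)$ in terms of just the angular momentum $\ell=L(r_0;V)$ of the circular motion and the radius $r_0$, eliminating $\omega$ and $\gamma$ altogether. Once this formula is in hand, the lemma reduces to inspection (direct direction) plus a short monotonicity argument (converse direction).

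\medskip

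First I would combine the two identities in \eqref{help}. Squaring $\gamma^{-1}=\sqrt{1-r_0^2\omega^2/c^2}$ gives $\gamma^2(1-r_0^2\omega^2/c^2)=1$, so $\gamma^2 = 1+r_0^2(\gamma\omega)^2/c^2$. Using $\gamma\omega=\ell/(mr_0^2)$ from the definition of $\ell$, I obtain
\[
\gamma^2 = 1+\frac{\ell^2}{c^2m^2r_0^2}, \qquad \text{equivalently}\qquad m\gamma r_0=\frac{1}{c}\sqrt{c^2m^2r_0^2+\ell^2}.
\]
Plugging this into the relation $V'(r_0)=\ell^2/(m\gamma r_0^3)$ from \eqref{dV-omega} immediately yields the universal formula
\[
V'(r_0)=\frac{c\,\ell^2}{r_0^2\sqrt{c^2m^2r_0^2+\ell^2}}, \qquad \ell=L(r_0;V).
\]

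\medskip

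The direct implication is now transparent: if $\ell=L(r_0;V)$ does not depend on $r_0$, then setting $k:=\ell^2>0$ and reading off the universal formula gives exactly \eqref{rar}, and $L^2=k$ by construction. For the converse, assume $V'$ has the form \eqref{rar} with some $k>0$, and let $\ell=L(r_0;V)$ for an arbitrary $r_0>0$. Equating the universal formula with \eqref{rar} and cancelling $c/r_0^2$ leads to
\[
\frac{\ell^2}{\sqrt{\ell^2+c^2m^2r_0^2}}=\frac{k}{\sqrt{k+c^2m^2r_0^2}}.
\]
Since $x\mapsto x/\sqrt{x+A}$ is strictly increasing on $(0,+\infty)$ for any $A>0$, this forces $\ell^2=k$, independently of $r_0$. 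Hence $L(\cdot;V)$ is constant and equal to $\sqrt{k}$.

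\medskip

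There is no real obstacle here; the only technical points are the elimination that produces $\gamma^2=1+\ell^2/(c^2m^2r_0^2)$ and the strict monotonicity used to invert the map $\ell^2\mapsto \ell^2/\sqrt{\ell^2+c^2m^2r_0^2}$ in the converse direction. Both are elementary, and existence/uniqueness of the circular motion (used implicitly to treat $\omega$, $\gamma$, $\ell$ as well-defined functions of $r_0$) is already guaranteed under the standing assumption \eqref{dV-positive} together with the fact that the right-hand side of \eqref{dV-omega}, viewed as a function of $\omega\in(0,c/r_0)$, is continuous, vanishes at $0$ and tends to $+\infty$ at $c/r_0$.
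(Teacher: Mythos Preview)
Your proof is correct and follows essentially the same route as the paper's: both derive the universal identity $V'(r_0)=c\ell^2\big/\bigl(r_0^2\sqrt{\ell^2+c^2m^2r_0^2}\bigr)$ with $\ell=L(r_0;V)$ and read off both implications from it. The only cosmetic difference is in the converse step, where the paper cross-multiplies and factors the resulting polynomial identity instead of invoking the strict monotonicity of $x\mapsto x/\sqrt{x+A}$.
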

\begin{proof}
First we obtain a formula connecting the functions $L(\cdot;V)$ and $V'$. From~\eqref{help} we deduce that
\[
L=\frac{mr_0^2\Omega}{\sqrt{1-\frac{r_0^2\Omega^2}{c^2}}}.
\]
Since $\Omega$ is positive,
\begin{equation}\label{freq}
\Omega = \frac{cL}{r_0\sqrt{L^2+c^2m^2r_0^2}}.
\end{equation}
Also, from~\eqref{dV-omega},
\begin{equation}\label{simple}
L\Omega = r_0 V'(r_0).
\end{equation}
Eliminating $\Omega$ from the last two identities we obtain
\[
\frac{cL^2}{r_0^2\sqrt{L^2+c^2m^2r_0^2}}=V'(r_0).
\]
Assume now that $L>0$ is independent of $r_0$. We obtain the formula~\eqref{rar} with $L^2=k$. Note that we are using here that there exists a circular motion for every $r_0>0$.

Conversely, assume that $V$ is of the type~\eqref{rar}. Squaring the identity~\eqref{simple},
\[
L^2\Omega^2 = r^2V'(r)^2 = \frac{c^2k^2}{r^2(k+c^2m^2r^2)}.
\]
Together with~\eqref{freq}, for each $r>0$,
\[
\frac{c^2L^4}{r^2(L^2+c^2m^2r^2)} = \frac{c^2k^2}{r^2(k+c^2m^2r^2)}.
\]
From $L^4k+L^4c^2m^2r^2=L^2k^2+k^2c^2m^2r^2$, $\forall r>0$, we deduce that $L^2=k$ and the function $L(\cdot;V)$ is constant.
\end{proof}

From~\eqref{rar} we get the potential
\[
V(r) = -c\sqrt{\frac{k}{r^2}+c^2m^2},
\]
which appeared previously on~\cite{KumBha2011}. 

Following \cite{KumBha2011}, using $\theta=\theta(t)$ as independent variable and the Clairaut's change of variable $\rho = r^{-1}$, equation~\eqref{relativistic-motion} is transformed into
\begin{equation}\label{clairaut}
\frac{d^2\rho}{d\theta^2}+\rho = -\frac{m}{\ell^2}\gamma(\rho,\tfrac{d\rho}{d\theta};\ell)W'(\rho),\ \rho>0
\end{equation}
with $W(\rho)=V(\frac{1}{\rho})$ and $\gamma(\rho,\eta;\ell)=\sqrt{1+\frac{\ell^2}{c^2m^2}(\rho^2+\eta^2)}$. 
On account of the limit case $c\rightarrow +\infty$, equation~\eqref{clairaut} can be interpreted as a relativistic oscillator, although we notice that it is not a typical second order equation with a potential because $\gamma$ depends on $\tfrac{d\rho}{d\theta}$, in contrast with the Euclidean case.

Circular motions of~\eqref{relativistic-motion} are in correspondence with equilibria of~\eqref{clairaut} by $\rho_0=\frac{1}{r_0}$, whereas non-circular periodic solutions of~\eqref{relativistic-motion} correspond to periodic solutions of~\eqref{clairaut} whose minimal period is commensurable with $\pi$. Next result shows that the previous is a necessary and sufficient condition.

\begin{lem}\label{commensurable}
Let $\textbf{x}(t)=r(t)e^{i\theta(t)}$ be a solution of~\eqref{relativistic-motion} and $\rho(\theta)$ be a solution of $\eqref{clairaut}$ for some $\ell\neq 0$. Then $\textbf{x}(t)$ is a non-circular periodic solution if and only if $\rho(\theta)$ is $\Theta$-periodic with $\Theta$ commensurable with $\pi$.
\end{lem}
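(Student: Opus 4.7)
The plan is to exploit that, thanks to energy conservation, the Lorentz factor can be expressed as a function of $r$ alone along any given orbit. From $E=mc^2(\gamma-1)+V(r)$ we obtain $\gamma(r) = 1 + (E-V(r))/(mc^2)$, and therefore $\dot\theta = \ell/(m\gamma(r)r^2)$ and $dt/d\theta = m\gamma(r)/(\ell\rho^2)$ become functions of $\rho=1/r$ alone. Since $\ell\neq 0$, $\dot\theta$ has constant sign, so $t\mapsto\theta(t)$ is a monotone diffeomorphism and $\rho(\theta)$ is well defined.

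For the forward implication, I assume $\textbf{x}(t)$ is $T$-periodic. Then $r(t)$, $\gamma(t)$ and $\dot\theta(t)$ are $T$-periodic, which gives
\[
\Theta:=\theta(t+T)-\theta(t)=\int_0^T\dot\theta(s)\,ds
\]
independent of $t$. The identity $r(t)e^{i\theta(t)}=r(t+T)e^{i\theta(t+T)}$ together with $r(t+T)=r(t)>0$ forces $e^{i\Theta}=1$, i.e.\ $\Theta\in 2\pi\mathbb{Z}$ (in particular commensurable with $\pi$). Inverting $\theta(t+T)=\theta(t)+\Theta$, one gets $t(\theta+\Theta)=t(\theta)+T$, so $\rho(\theta+\Theta)=1/r(t(\theta)+T)=\rho(\theta)$.

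For the converse, assume $\rho$ is $\Theta$-periodic with $\Theta/\pi\in\mathbb{Q}$, and choose $N\in\mathbb{N}$ with $N\Theta\in 2\pi\mathbb{Z}$. Because $dt/d\theta$ is a function of $\rho$ alone, it is also $\Theta$-periodic, hence
\[
T:=t(\theta+\Theta)-t(\theta)
\]
does not depend on $\theta$. Along the orbit $\textbf{x}(t(\theta))=\rho(\theta)^{-1}e^{i\theta}$, so
\[
\textbf{x}(t(\theta)+NT)=\rho(\theta+N\Theta)^{-1}e^{i(\theta+N\Theta)}=\textbf{x}(t(\theta))\,e^{iN\Theta}=\textbf{x}(t(\theta)),
\]
proving that $\textbf{x}$ is $NT$-periodic. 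The non-circularity condition transports without issue: $r$ is non-constant iff $\rho$ is non-constant, so circular motions correspond exactly to equilibria of~\eqref{clairaut}.

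The main (small) obstacle is the last step: one cannot expect $\Theta$ to belong to $2\pi\mathbb{Z}$, only to be commensurable with $\pi$, so the honest period of $\textbf{x}$ is $NT$ rather than $T$. Everything else is a direct translation between the two time parametrisations, made legitimate by the fact that $\gamma$ collapses to a function of $r$ via the energy integral.
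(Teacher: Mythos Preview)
Your proof is correct and follows essentially the same route as the paper's: both arguments hinge on the fact that $dt/d\theta$ is $\Theta$-periodic, which yields $t(\theta+\Theta)=t(\theta)+T$ and lets you pass back and forth between the two parametrisations. The only real difference is cosmetic: you use energy conservation to write $\gamma$ as a function of $\rho$ alone, whereas the paper keeps $\gamma=\gamma(\rho,\rho';\ell)$ and simply notes that both $\rho$ and $\rho'$ are $\Theta$-periodic, so the integrand is too---either observation suffices.
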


\begin{proof}
Indeed, given $\rho(\theta)$ a periodic solution of~\eqref{clairaut} with minimal period $\Theta$, we define
\[
t(\theta) = t_0 + \int_0^{\theta} \frac{m\gamma(\rho(\varphi),\frac{d\rho}{d\theta}(\varphi);\ell)}{\ell\rho(\varphi)^2}d\varphi,
\]
The integrand of the previous expression is $\Theta$-periodic, so we can write $t(\theta)=\psi(\theta)+ \sigma\theta$, where $\psi$ is a $\Theta$-periodic function and 
\[
\sigma:=\frac{1}{\Theta}\int_0^{\Theta}\frac{m\gamma(\rho(\varphi),\frac{d\rho}{d\theta}(\varphi);\ell)}{\ell\rho(\varphi)^2}d\varphi.
\]
Consequently, $t(\theta+\Theta) = t(\theta)+\sigma\Theta$. The inverse function $\theta=\theta(t)$ will satisfy $\theta(t+\sigma\Theta)=\theta(t)+\Theta$. Defining $r(t)=\rho(\theta(t))^{-1}$, we notice that
\[
r(t+\sigma\Theta) = \frac{1}{\rho(\theta(t+\sigma\Theta))} = \frac{1}{\rho(\theta(t)+\Theta)} = \frac{1}{\rho(\theta(t))} = r(t).
\]
That is, $r(t)$ is a $\sigma\Theta$-periodic function. Moreover, since $\theta(t+\sigma\Theta)=\theta(t)+\Theta$ then $\theta(t)=\Psi(t) + \frac{t}{\sigma}$, where $\Psi$ is a $\sigma\Theta$-periodic function. Therefore, $\textbf{x}(t)=r(t)e^{i\theta(t)}$ is periodic with minimal period $T=m\sigma\Theta$ if $\Theta=2\pi\frac{n}{m}$ with $\frac{n}{m}$ irreducible. Otherwise the set $\{\textbf{x}(t): t\in\mathbb{R}\}$ is dense in a topological torus and therefore it is not a closed orbit. The previous also works in the converse way.
\end{proof}

In view of Lemma~\ref{commensurable}, Bertrand's property is directly related to prove that the family of second order equations~\eqref{clairaut} is a $\Theta$-isochronous family near equilibria for some $\Theta>0$ commensurable with $\pi$. Notice that $\Theta$ must be independent of the parameter $r_0$. The analyticity of $V$ implies that the local isochronicity must indeed be global. A more precise statement is given by the next result. 

\begin{defi}\label{defi:isocrono}
Assume first that $\ell>0$ is fixed. The system
\begin{equation}\label{system-clairaut}
\dfrac{d\rho}{d\theta} = \eta,\ \dfrac{d\eta}{d\theta} = -\rho-\dfrac{m}{\ell^2}\gamma(\rho,\eta;\ell)W'(\rho)
\end{equation}
has an isochronous center at an equilibrium $(\rho_0,0)$ with $\rho_0>0$ if there exists $\epsilon>0$ such that the solution with initial condition $\rho(0)=\rho_0$, $\eta(0)=\hat\eta$, $0<\hat\eta<\epsilon$ has minimum period $\Theta>0$.

Note that $\Theta$ is independent of $\hat\eta$. An sketched representation of the previous initial conditions is given in Figure~\ref{fig:centro}A. From this picture it is clear the the phase portrait is of the type Figure~\ref{fig:centro}B.

\begin{figure}
\centering
\begin{subfigure}{.45\textwidth}
\centering
\includegraphics[scale=1]{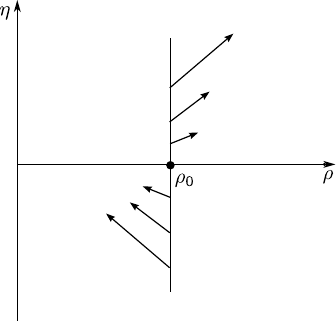} 
\caption{\label{fig:centro_a}}
\end{subfigure}
\begin{subfigure}{.45\textwidth}
\centering
\includegraphics[scale=1]{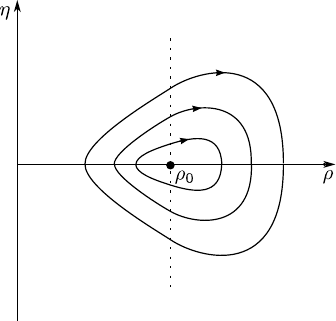}
\caption{\label{fig:centro_b}}
\end{subfigure}
\caption{\label{fig:centro}On the left, the vector field of~\eqref{system-clairaut} on the vertical transversal section from $(\rho_0,0)$ according with Definition~\ref{defi:isocrono}. On the right, the correspondign qualitative phase portrait locally near $(\rho_0,0)$.}
\end{figure}

Moreover, notice that if $\Gamma_n$ is the orbit passing through $(\rho_0,\eta_n)$ with $\eta_n\rightarrow 0$, $\eta_n>0$, then $\Gamma_n$ converges to $\{0\}$ in the Hausdorff distance. This is a consequence of continuous dependence, because the corresponding solution $(\rho_n(\theta),\eta_n(\theta))$ converges to $(0,0)$ uniformly in the interval $[0,\Theta]$.

We will say that the equation~\eqref{system-clairaut} contains a family of isochronous centers if there exists a number $\Theta>0$, an non-empty open interval $]\ell_-,\ell_+[$, and an analytic function $\rho_0(\ell)$ such that for each $\ell$, $\ell_-<\ell<\ell_+$, $(\rho_0(\ell),0)$ is an isochronous center with period $\Theta$.
\end{defi}

\begin{lem}\label{translation}
Assume that the potential $V(r)$ has the Bertrand property. Then the angular momentum function $L(\cdot,V)$ is not constant and the corresponding equation~\eqref{clairaut} contains a family of isochronous centers.
\end{lem}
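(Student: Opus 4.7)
The plan is to argue that Bertrand's property forces each Clairaut equilibrium $(1/r_0,0)$ of~\eqref{system-clairaut} (with parameter $\ell=L(r_0;V)$) to be an isochronous centre whose period is independent of $r_0$, and then to rule out the case $L(\cdot;V)$ constant via the specific potential supplied by Lemma~\ref{lema:momento-parametro}. Both conclusions of the lemma then fall out at once.

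First I will fix $r_0>0$ and set $\ell=L(r_0;V)$, $\rho_0=1/r_0$. Bertrand's property provides a neighbourhood $\mathcal{U}_{r_0}$ of $\mathcal{C}_{r_0}$ entirely filled by periodic solutions. Intersecting $\mathcal{U}_{r_0}$ with the level set $\{L=\ell\}$, quotienting by the rotational symmetry and translating through the Clairaut change produces a punctured neighbourhood of $(\rho_0,0)$ in the phase plane of~\eqref{system-clairaut} on which every orbit is periodic; by Lemma~\ref{commensurable} each period lies in $\pi\Q$. Since the vector field is real analytic, the period function on the period annulus of a planar analytic centre is analytic, in particular continuous, and a continuous map from a connected set into the countable set $\pi\Q$ must be constant. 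Thus $(\rho_0,0)$ is an isochronous centre with some period $\Theta(r_0)\in\pi\Q$. As $r_0$ varies in the connected interval $]0,+\infty[$ the equilibrium and the parameter $\ell$ depend analytically on $r_0$, so $\Theta(r_0)$ is continuous with values in $\pi\Q$, hence $\Theta(r_0)\equiv\Theta_0$ for some fixed $\Theta_0$.

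To exclude $L(\cdot;V)$ being constant I will argue by contradiction: Lemma~\ref{lema:momento-parametro} reduces the situation to $V(r)=-c\sqrt{k/r^2+c^2m^2}$ with $\ell^2=L^2=k$. A direct computation of~\eqref{system-clairaut} with $W(\rho)=-c\sqrt{k\rho^2+c^2m^2}$ and $\gamma(\rho,\eta;\ell)^2=1+k(\rho^2+\eta^2)/(c^2m^2)$ gives
\[
\frac{d\eta}{d\theta}=\rho\left(\frac{\sqrt{c^2m^2+k\rho^2+k\eta^2}}{\sqrt{c^2m^2+k\rho^2}}-1\right)\geq 0,
\]
with equality if and only if $\eta=0$. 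Hence $\eta$ is monotonically non-decreasing along every orbit and~\eqref{system-clairaut} at $\ell^2=k$ admits no non-trivial periodic orbit, contradicting the outcome of the previous paragraph. Consequently $L(\cdot;V)$ is non-constant; being real analytic on $]0,+\infty[$, its image contains an open interval $]\ell_-,\ell_+[$ on which it is locally invertible, yielding an analytic $\rho_0(\ell)$, and the first paragraph then exhibits the $\ell$-family of isochronous centres with common period $\Theta_0$ required by Definition~\ref{defi:isocrono}.

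The step I expect to be most delicate is the passage from \emph{all nearby orbits periodic with period in $\pi\Q$} to \emph{isochronous}, since a priori Bertrand's hypothesis only supplies pointwise periodicity and one needs the joint continuity of the period in the initial condition and in $r_0$ (equivalently in $\ell$). For analytic planar centres this continuity is classical, but it has to be set up cleanly enough that, after invoking the connectedness of $]0,+\infty[$, it also delivers the independence of $\Theta$ from $r_0$.
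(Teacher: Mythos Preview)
Your approach is essentially the paper's: pass via Lemma~\ref{commensurable} to the Clairaut system, use Bertrand to fill a neighbourhood of $(\rho_0,0)$ with periodic orbits whose minimal periods lie in $\pi\Q$, invoke analyticity to force the period to be constant, and handle the degenerate case $L(\cdot;V)\equiv\text{const}$ separately. Two differences are worth noting. First, for the constant-$L$ case the paper argues topologically (for $\ell\neq\sqrt{k}$ the system~\eqref{system-clairaut} has no equilibria, while for $\ell=\sqrt{k}$ the equilibria form the continuum $\{(\rho_0,0):\rho_0>0\}$, and in either case no closed orbit can exist); your explicit computation of $d\eta/d\theta\geq 0$ at $\ell=\sqrt{k}$ is more direct and arguably cleaner. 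Second, the paper obtains analyticity of $\Theta$ \emph{jointly} in the initial condition and in $\ell$ via the implicit function theorem applied to the return condition $\rho(\Theta,\delta)=\rho_0(\ell)$, the nondegeneracy being $\partial\rho/\partial\theta=\delta>0$; your two-step version (fix $r_0$, then vary it) is equivalent once you observe that for an isochronous centre the common period coincides with the linearization period $2\pi/\sqrt{A(\rho_0,0;\ell)}$, which depends analytically on $r_0$---this resolves the ``delicate point'' you flagged. One small ordering issue: the conclusion ``isochronous centre'' in your first paragraph tacitly presupposes that $(\rho_0,0)$ is an \emph{isolated} equilibrium, which is only guaranteed after $L$ is known to be non-constant; since the contradiction in your second paragraph uses only the weaker statement ``nearby non-equilibrium orbits are periodic,'' the logic is sound, but it would read more cleanly to dispose of the constant-$L$ case first, as the paper does.
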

\begin{proof}
First we prove that the Bertrand property cannot hold if $L(\cdot,V)$ is constant. We know from Lemma~\ref{lema:momento-parametro} that~\eqref{rar} holds with $\ell=\sqrt{k}$. Then
\[
W(\rho)=-c\sqrt{k\rho^2+c^2m^2}
\]
and the system~\eqref{system-clairaut} does not have equilibria unless $\ell=\sqrt{k}$. In this case the set of equilibria is the continuum
\[
\{(\rho_0,0) : \rho_0>0\}.
\]
Since any closed orbit must surround an equilibrium, it is clear the no closed orbit can exist for any $\ell$. In consequence the only periodic solutions of~\eqref{EL} are circular solutions (with positive or negative orientation) and the Bertrand property cannot hold (see Figure~\ref{fig:retrato_fase}).

Assume now that $L(\cdot,V)$ is not constant. Since this function is analytic we can select an interval $]r_-,r_+[$ where it is one-to-one and $L'(r)\neq 0$. For each $r_0\in ]r_-,r_+[$ we consider $\ell=L(r_0,V)$ and obtain the interval $]\ell_-,\ell_+[$. Let $(\rho(\theta,\delta),\eta(\theta,\delta))$ be the solution of~\eqref{system-clairaut} with initial condition $\rho(0)=\rho_0(\ell)$, $\eta(0)=\delta$. From the Bertrand property and Lemma~\ref{commensurable} we know that this solution is periodic with minimal period commensurable with $\pi$. Let this period be denoted by $\Theta=\Theta(\delta,\ell)$. It satisfies
\[
\rho(\Theta,\delta)=\rho_0(\ell).
\]
Since $\frac{\partial\rho}{\partial\theta}(\Theta,\delta)=\eta(\Theta) = \eta(0)=\delta>0$ we can interpret the previous identity as an implicit function problem with unknown $\Theta=\Theta(\delta,\ell)$. This function is analytic and takes values in $\pi\mathbb{Q}$. Therefore $\Theta(\delta,\ell)$ must be locally constant. We have obtained the family of isochronous centers.
\end{proof}

\begin{figure}
\centering
\includegraphics[scale=1]{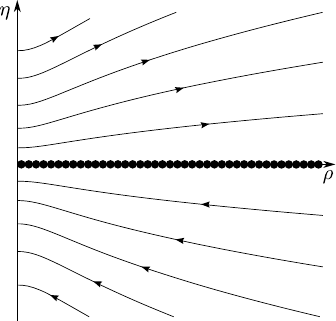}
\caption{\label{fig:retrato_fase}Phase portrait of system~\eqref{system-clairaut} with $\ell=\sqrt{k}$ and $W(\rho)=-c\sqrt{k\rho^2+c^2m^2}$. For the computation we use $m=c=k=1$. Dots represent equilibria.}
\end{figure}

\begin{thm}\label{thm:no-bertrand}
There is no real analytic function $V$ satisfying the Bertrand property.
\end{thm}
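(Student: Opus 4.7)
The plan is to argue by contradiction. Suppose $V$ is a real analytic potential satisfying Bertrand's property. By Lemma~\ref{translation}, the Clairaut system~\eqref{system-clairaut} then admits a one-parameter family of isochronous centers $(\rho_0(\ell),0)$, $\ell\in\,]\ell_-,\ell_+[$, all sharing a common period $\Theta>0$. The whole task reduces to showing that such a family cannot exist.

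First, I would exploit isochronicity at the linear level. Because $\partial_\eta \gamma(\rho,0;\ell)=0$, the Jacobian of~\eqref{system-clairaut} at each equilibrium has the form $\bigl(\begin{smallmatrix}0 & 1\\ \mu & 0\end{smallmatrix}\bigr)$, and using the equilibrium relation $\rho_0+(m/\ell^2)\gamma_0 W'(\rho_0)=0$ with $\gamma_0:=\gamma(\rho_0,0;\ell)$, a short computation gives
\[
-\mu(\ell)=\frac{1}{\gamma_0(\ell)^{2}}+\frac{m\,\gamma_0(\ell)\,W''(\rho_0(\ell))}{\ell^{2}}.
\]
Linear isochronicity forces $-\mu(\ell)\equiv(2\pi/\Theta)^{2}$ on $]\ell_-,\ell_+[$. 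After eliminating $\ell$ in favour of $r_0=1/\rho_0$ via the equilibrium relation, this produces an explicit first-order ODE satisfied by $V'$ on an open subinterval of $(0,+\infty)$, which by analyticity propagates to the full positive axis. One can check that in the classical limit $c\to+\infty$ this ODE reduces to $rV''/V'=n^{2}-3$ and recovers the two Bertrand potentials, but the extra term $1/\gamma_0^{2}$ with $\gamma_0$ determined algebraically by $r_0 V'(r_0)$ already severely constrains the candidates.

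The second step is to rule out all remaining candidates by looking at the next non-trivial Taylor coefficient of the period function of the center. Full isochronicity requires the amplitude correction $\Theta_{2}(\ell)$ to vanish identically on $]\ell_-,\ell_+[$; computing $\Theta_{2}$ via the planar period function machinery developed in \cite{Albouy,OrtRoj2019} yields a second compatibility relation on $V$. Intersecting this with the linear ODE from the previous step gives an overdetermined analytic system which admits no non-trivial solution, delivering the desired contradiction.

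The main obstacle is the computation of $\Theta_{2}(\ell)$. The conserved Hamiltonian $c^{2}m\gamma(\rho,\eta;\ell)+W(\rho)$ of~\eqref{system-clairaut} is not of classical ``kinetic plus potential'' form, since $c^{2}m\gamma$ is not quadratic in $\eta$; the standard formulas for the period coefficients of Hamiltonian centers therefore need to be adapted to include the Lorentz factor. Once this adaptation is done, checking non-vanishing of $\Theta_{2}$ along the analytic curve $\ell\mapsto r_0(\ell)$ determined by the linear ODE should amount to an algebraic verification, which is where I expect the bulk of the technical work to concentrate.
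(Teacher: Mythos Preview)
Your outline is correct and matches the paper's proof: reduce via Lemma~\ref{translation}, impose the linear isochronicity condition to obtain the first-order relation~\eqref{recursividad} for $W'$ (your $-\mu=1/\gamma_0^{2}+m\gamma_0 W''/\ell^{2}$ is exactly the paper's $A(\rho_0,0;\mathscr{L}(\rho_0))$ after using the equilibrium identity), and then kill the remaining candidates with the second period coefficient. The only tactical difference is that the paper does not adapt Hamiltonian period-function formulas but works directly in polar coordinates about $(\rho_0,0)$: the expansion of the period to order $\xi^{2}$ produces a fixed degree-five polynomial $Q(x)$ in the single variable $x=\rho_0^{2}\mathscr{L}(\rho_0)^{2}/(c^{2}m^{2})$, so isochronicity forces this analytic function of $\rho_0$ to be a constant root $K(a)$ of $Q$, and the equilibrium relation together with~\eqref{recursividad} then pins $K(a)=(1-a)/a$, which one checks is never a root of $Q$ for $0<a<1$.
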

\begin{proof}
By means of Lemma~\ref{translation} we prove the result showing that there is no analytic function $V$ different from the type~\eqref{rar} such that~\eqref{system-clairaut} contains a family of isochronous centers.

With the aim of reaching a contradiction, assume that \eqref{system-clairaut} has a family of isochronous centers. That is, there exists a number $\Theta>0$, an non-empty open interval $]\ell_-,\ell_+[$, and an analytic function $\rho_0(\ell)$ such that for each $\ell$, $\ell_-<\ell<\ell_+$, $(\rho_0(\ell),0)$ is an isochronous center of~\eqref{system-clairaut} with period $\Theta$. By Lemma~\ref{lema:momento-parametro}, the function $L(\cdot;V)$ is not constant. Since $L$ is analytic, we can select the interval $]\ell_-,\ell_+[$ where it is one-to-one and consider
$\Lambda:=\{\rho_0(\ell) : \ell_-<\ell<\ell_+\}$. Then, $\Lambda$ is a non-empty interval and for each $\rho_0\in\Lambda$, system~\eqref{system-clairaut} has an isochronous center at $(\rho_0,0)$ with $\ell=\mathscr{L}(\rho_0)$ and $\mathscr{L}(\rho_0):=L\bigl(\frac{1}{\rho_0},V\bigr)$. In particular, $\rho_0$ satisfies the identity
\begin{equation}\label{equilibrium}
\rho_0+\frac{m}{\mathscr{L}(\rho_0)^2}\gamma(\rho_0,0;\mathscr{L}(\rho_0))W'(\rho_0)=0.
\end{equation}
From now on we omit the dependence of $L$ on $\rho_0$ and $V$ for the sake of simplicity.

For a fixed $\ell$, the Jacobian matrix of the vector field defined by~\eqref{system-clairaut} is given by
\[
\left(\begin{matrix}
0 & 1\\
-A(\rho,\eta;\ell) & -B(\rho,\eta;\ell)
\end{matrix}\right)
\]
with
\[
A(\rho,\eta;\ell)=1+\frac{\rho W'(\rho)}{c^2m\gamma(\rho,\eta;\ell)}+\frac{m}{\ell^2}\gamma(\rho,\eta;\ell)W''(\rho),
\]
and
\[
B(\rho,\eta;\ell)=\frac{\eta W'(\rho)}{c^2m\gamma(\rho,\eta;\ell)}.
\]
Since $(\rho_0,0)$ is a center of minimal period $\Theta$ and $B(\rho_0,0;\mathscr{L}(\rho_0))$ is identically zero, then $A(\rho_0,0;\mathscr{L}(\rho_0))=\left(\frac{2\pi}{\Theta}\right)^2$. Invoking the equality~\eqref{equilibrium} we have 
\[
\gamma(\rho_0,0;\mathscr{L}(\rho_0))=-\frac{\rho_0\mathscr{L}(\rho_0)^2}{mW'(\rho_0)}.
\]
After the substitution of the previous in the expression of $A(\rho_0,0;\mathscr{L}(\rho_0)^2)$ and some algebraic manipulations, we find the equality
\begin{equation}\label{recursividad}
W''(\rho_0)=-\frac{a}{\rho_0}W'(\rho_0)-\frac{1}{c^2\mathscr{L}(\rho_0)^2\rho_0}W'(\rho_0)^3,
\end{equation}
with $a:=\left(\frac{2\pi}{\Theta}\right)^2-1>-1$. We note that $a$ does not depend on $\rho_0$.

By implicit derivation of~\eqref{equilibrium} with respect to $\rho_0$ and taking into account the identities \eqref{equilibrium} and~\eqref{recursividad} we can write
\begin{equation}\label{derivada-l}
\mathscr{L}'(\rho_0)=-\frac{(1+a)\mathscr{L}(\rho_0)(c^2m^2+\rho_0^2\mathscr{L}(\rho_0)^2)}{2c^2m^2\rho_0+\rho_0^3\mathscr{L}(\rho_0)^2}.
\end{equation}
Using the previous together with~\eqref{recursividad} and with the aid of an algebraic manipulator, we can compute the derivatives 
\begin{equation}\label{derivada-3}
W'''(\rho_0)=\frac{W'(\rho_0)\bigl(\mu_0(\rho_0)+\mu_1(\rho_0)W'(\rho_0)^2+\mu_2(\rho_0)W'(\rho_0)^4\bigr)}{c^4\rho_0^2\mathscr{L}(\rho_0)^4(2c^2m^2+\rho_0^2\mathscr{L}(\rho_0)^2)}
\end{equation}
and
\begin{equation}\label{derivada-4}
W^{(4)}(\rho_0)=\frac{-W'(\rho_0)\bigl(\eta_0(\rho_0)+\eta_1(\rho_0)W'(\rho_0)^2+\eta_2(\rho_0)W'(\rho_0)^4+\eta_3(\rho_0)W'(\rho_0)^6 \bigr)}{c^6\rho_0^3\mathscr{L}(\rho_0)^6(2c^2m^2+\rho_0^2\mathscr{L}(\rho_0)^2)^3}
\end{equation}
with
\begin{align*}
\mu_0(\rho_0)&=a(1+a)c^4\mathscr{L}(\rho_0)^4(2c^2m^2+\rho_0^2\mathscr{L}(\rho_0)^2),\\
\mu_1(\rho_0)&=c^2\mathscr{L}(\rho_0)^2(6ac^2m^2+(-1+2a)\rho_0^2\mathscr{L}(\rho_0)^2),\\
\mu_2(\rho_0)&=6c^2m^2+3\rho_0^2\mathscr{L}(\rho_0)^2,\\
\eta_0(\rho_0)&=a(1+a)(2+a)c^6\mathscr{L}(\rho_0)^6(2c^2m^2+\rho_0^2\mathscr{L}(\rho_0)^2)^3,\\
\eta_1(\rho_0)&=c^4\mathscr{L}(\rho_0)^4(8a(4+7a)c^6m^6+12a(2+5a)c^4m^4\rho_0^2\mathscr{L}(\rho_0)^2\\
&\phantom{=}+2(-1+10a^2)c^2m^2\rho_0^4\mathscr{L}(\rho_0)^4+3a^2\rho_0^6\mathscr{L}(\rho_0)^6),\\
\eta_2(\rho_0)&=9c^2\mathscr{L}(\rho_0)^2(4ac^2m^2+(-1+a)\rho_0^2\mathscr{L}(\rho_0)^2)(2c^2m^2+\rho_0^2\mathscr{L}(\rho_0)^2)^2,\\
\eta_3(\rho_0)&=15(2c^2m^2+\rho_0^2\mathscr{L}(\rho_0)^2)^3.
\end{align*}

Now we consider polar coordinates $(R,\phi)$ on~\eqref{system-clairaut} centred at the equilibrium $(\rho,\eta)=(\rho_0,0)$ for a fixed $\ell=\mathscr{L}(\rho_0)$. That is, $\rho=\rho_0+R\cos(\phi)$, $\eta=R\sin(\phi)$. We obtain
\begin{equation}\label{sistema-polares}
\begin{cases}\vspace*{.2cm}
\dfrac{dR}{d\theta}&=-\sin(\phi)F(R,\phi,\rho_0),\\ 
\dfrac{d\phi}{d\theta}&=-1-\dfrac{\cos(\phi)}{R}F(R,\phi,\rho_0).
\end{cases}
\end{equation}
with
\[
F(R,\phi,\rho_0):=\rho_0+\frac{m}{\mathscr{L}(\rho_0)^2}\sqrt{1+\frac{(R^2+\rho_0^2+2R\rho_0\cos(\phi))\mathscr{L}(\rho_0)^2}{c^2m^2}}W'(\rho_0+R\cos(\phi)).
\]

For small $\xi>0$ let $R(\phi,\xi,\rho_0)$ be the solution with initial condition $R(0,\xi,\rho_0)=\xi$ of the differential equation
\[
\frac{dR}{d\phi}=\frac{\sin(\phi)F(R,\phi,\rho_0)}{1+\frac{\cos(\phi)}{R}F(R,\phi,\rho_0)}.
\]
Using the second equation in~\eqref{sistema-polares} we have that the period function of the center at the origin of~\eqref{sistema-polares} can be written as
\[
P(\xi)=\int_0^{2\pi}\frac{1}{1+\dfrac{F(R(\phi,\xi,\rho_0),\phi,\rho_0)}{R(\phi,\xi,\rho_0)}\cos(\phi)}d\phi.
\]
We compute the first terms on the asymptotic expansion of $P(\xi)$ near $\xi=0$, the so-called period constants. To do so, we first compute the first terms on the asymptotic expansion of $R(\phi,\xi,\rho_0)$ near $\xi=0$ using the equality
\[
\frac{dR}{d\phi}(\phi,\xi,\rho_0)\left(1+\frac{\cos(\phi)}{R(\phi,\xi,\rho_0)}F(R(\phi,\xi,\rho_0),\phi,\rho_0)\right)=\sin(\phi)F(R(\phi,\xi,\rho_0),\phi,\rho_0).
\]
Taking $R(\phi,\xi,\rho_0)=R_1(\phi,\rho_0)\xi+R_2(\phi,\rho_0)\xi^2+O(\xi^3)$ and on account of the expressions \eqref{recursividad}-\eqref{derivada-4}, with the help of an algebraic manipulator we can compute
\[
R_1(\phi,\rho_0)=\frac{\sqrt{1+a}}{\sqrt{1+a\cos^2(\phi)}}
\]
and
\[
R_2(\phi,\rho_0)=\frac{R_1(\phi,\rho_0)\left(\lambda_1(\rho_0)+\frac{R_1(\phi,\rho_0)}{1+a\cos(\phi)^2}(\lambda_2(\rho_0)\cos(\phi)+\lambda_3(\rho_0)\cos(\phi)^3)\right)}{6\rho_0(2c^4m^4+3c^2m^2\rho_0^2\mathscr{L}(\rho_0)^2+\rho_0^4\mathscr{L}(\rho_0)^4)},
\]
with
\begin{align*}
\lambda_1(\rho_0)&=-(2ac^4m^4+3(2+a)c^2m^2\rho_0^2\mathscr{L}(\rho_0)^2+(2+a)\rho_0^4\mathscr{L}(\rho_0)^4),\\
\lambda_2(\rho_0)&=3\rho_0^2\mathscr{L}(\rho_0)^2(2c^2m^2+\rho_0^2\mathscr{L}(\rho_0)^2),\\
\lambda_3(\rho_0)&=2a(1+a)c^4m^4+3a(3+a)c^2m^2\rho_0^2\mathscr{L}(\rho_0)^2+(-1+3a+a^2)\rho_0^4\mathscr{L}(\rho_0)^4.
\end{align*}
We can now compute the first terms in the asymptotic expansion of $P(\xi)$ near $\xi=0$, obtaining
\[
P(\xi)=\Theta-\frac{\pi c^{10}m^{10} Q\left(\frac{\rho_0^2\mathscr{L}(\rho_0)^2}{c^2m^2}\right)}{12\sqrt{1+a}\rho_0^2(2c^2m^2+\rho_0^2\mathscr{L}(\rho_0)^2)^3(c^2m^2+\rho_0^2\mathscr{L}(\rho_0)^2)^2}\xi^2+\cdots
\]
with $Q(x)=8(3-a)a+28(3-a)ax+2(3-a)(8+19a)x^2+(63+46a-25a^2)x^3+(22+10a-8a^2)x^4+(2+2a-a^2)x^5$. Since we are assuming that $\Theta$ is a constant independent of $\rho_0$, also $a$ has this property. In consequence, the polynomial $Q(x)$ is independent of $\rho_0$.

If $(\rho_0,0)$ were an isochronous center, the function $P(\xi)$ should be constant. This should imply that the function
\[
\sigma(\rho_0):=\frac{\rho_0^2\mathscr{L}(\rho_0)^2}{c^2m^2}
\]
will take values in the set of roots of $Q(x)$. This polynomial has degree $5$ if $a\neq 1\pm\sqrt{3}$ and degree $4$ if $a=1\pm\sqrt{3}$. In any case $Q(x)$ has at most five roots. Now we can deduce that the continuous function $\sigma(\rho_0)$ is indeed a constant independent of $\rho_0$. Assume
\begin{equation}\label{trick}
\sigma(\rho_0)=K(a)
\end{equation}
where $K(a)>0$ is a positive root of $Q(x)$. The rest of the proof aims to show that this identity cannot hold for an isochronous center. Indeed, assuming that~\eqref{trick} holds, by equality~\eqref{equilibrium} we have
\[
W'(\rho_0)=-\frac{mc^2K(a)}{\sqrt{1+K(a)}\rho_0}.
\]
Invoking the differential equation~\eqref{recursividad}, we find
\[
K(a)^2+(a-1)(1+K(a))K(a)=0,
\]
which produces $K(a)=\frac{1-a}{a}$. Evaluating the polynomial,
\[
Q\bigl(\tfrac{1-a}{a}\bigr)=\frac{2(1+a)(1+6a-9a^2+6a^3)}{a^5}.
\]
Therefore, $K(a)=\frac{1-a}{a}$ is a positive root of $Q(x)$ if and only if $0<a<1$ and $1+6a-9a^2+6a^3=0$. Since the polynomial $1+6a-9a^2+6a^3$ has no roots on the interval $[0,1]$, we have reached a contradiction. This finishes the proof of the result.
\end{proof}

\begin{rem}
Although the function $\mathscr{L}(\rho_0)$ is implicitly defined by~\eqref{equilibrium}, it can be explicitly defined since it solves an algebraic equation of low degree. Indeed, from the formula expressing the angular momentum of circular motions, we have that the function $\tilde{\Gamma}(\rho_0):=\Gamma\bigl(\tfrac{1}{\rho_0};V\bigr)$ is given by
\[
\tilde{\Gamma}(\rho_0)=\sqrt{1+\frac{\rho_0^2\mathscr{L}(\rho_0)^2}{c^2m^2}}\geq 1.
\] 
Therefore, using~\eqref{equilibrium}, $\tilde{\Gamma}$ satisfies the equation
\[
\gamma^2+\frac{1}{c^2m^2}\rho_0W'(\rho_0)\gamma -1 =0.
\]
Solving the equation we find
\[
\tilde{\Gamma}(\rho_0) = \frac{1}{2}\left(-\frac{1}{mc^2}\rho_0W'(\rho_0)+\sqrt{\frac{1}{m^2c^4}\rho_0^2W'(\rho_0)^2+4}\right),
\]
where we choose the positive determination of the square root in order to verify $\gamma\geq 1$. With this equation~\eqref{recursividad} writes
\[
W''(\rho_0)=-\frac{a}{\rho_0}W'(\rho_0)-\frac{\rho_0}{m^2c^4(\tilde{\Gamma}(\rho_0)^2-1)}W'(\rho_0)^3.
\]
The previous equation can be integrated to obtain an implicit description of the possible functions $W$ that could be (but are not) candidate to give a positive answer to Bertrand's question. This gives an alternative way of proving Theorem~\ref{thm:no-bertrand}.

We finally note that, using the above identities, $c^2\tilde{\Gamma}(\rho_0)\rightarrow+\infty$ as $c\rightarrow+\infty$. The same holds for $c^4(\tilde{\Gamma}(\rho_0)^2-1)$ and the equation~\eqref{recursividad} tends to
\[
W''(\rho_0)=-\frac{a}{\rho_0}W'(\rho_0).
\]

The latest equality appeared previously in \cite{OrtRoj2019} as the differential equation that Bertrand's potential candidates must satisfy in a Newtonian world.
\end{rem}

\section{Integrability of the special-relativistic Coulomb's law} \label{sec:integrability}

This section is devoted to prove Proposition~\ref{integrabilidad}. Let us consider the Lagrangian 
\begin{equation}\label{Coulomb}
\mathcal L(\textbf{x},\dot{\textbf{x}}) = mc^2- m c^2 \sqrt{1-\frac{|\dot{\textbf{x}}|^2}{c^2}}+\frac{k}{|\textbf{x}|},\ \ (\textbf{x},\dot {\textbf{x}})\in  (\R^2\setminus\{0\})\times B_c(0),
\end{equation}
associated to the relativistic Coulomb problem. As shown in Section 2, the energy $H$ in \eqref{energy} and the angular momentum $L$ in \eqref{angular_momentum} are two first integrals of motion. It is a computation to show that
 \[
 \nabla H = \begin{pmatrix} \frac{k}{|\textbf{q}|^3} \textbf{q}\\ \frac{1}{\sqrt{m^2+\frac{|\textbf{p}|^2}{c^2}}} \textbf{p} \end{pmatrix} \ \text{and}\  \ \nabla L = \begin{pmatrix} \textbf{Jp} \\ -\textbf{Jq} \end{pmatrix},
 \]
 and so
 \[
 \{H,L\} = \left<\nabla H, \begin{pmatrix} -\textbf{Jq} \\ \textbf{Jp} \end{pmatrix}\right> = -\frac{k}{|\textbf{q}|^3} \left<\textbf{q}, \textbf{Jq}\right> + \frac{1}{\sqrt{m^2+\frac{|\textbf{p}|^2}{c^2}}} \left<\textbf{p},\textbf{Jp}\right> = 0.
 \]
 That is, $H$ and $L$ are first integrals in involution. Let us study their linear independence. Since $\textbf{q}\neq\textbf{0}$, both vectors $\nabla H$ and $\nabla L$ are non zero in $\R^4$. Therefore $\nabla H$ and $\nabla L$ are linearly dependent if and only if there exists $\lambda\neq 0$ such that $\lambda \nabla H = \nabla L$. From this equality we obtain the system of equations
\[
\begin{cases}
   \dfrac{\lambda k}{|{\textbf{q}}|^3}\textbf{q} & =  \textbf{Jp}\\
   \dfrac{\lambda}{\sqrt{m^2+\frac{|\textbf{p}|^2}{c^2}}} \textbf{p} & = -\textbf{Jq}
\end{cases}.
\]
Using $\textbf{J}^2=-\textbf{I}$, 
\[
\textbf{q} = -\textbf{J}^2\textbf{q} = \dfrac{\lambda}{\sqrt{m^2+\frac{|\textbf{p}|^2}{c^2}}} \textbf{Jp} = \dfrac{1}{\sqrt{m^2+\frac{|\textbf{p}|^2}{c^2}}} \dfrac{\lambda^2 k}{|\textbf{q}|^3}\textbf{q}.
\]
Thus,
\[
\lambda^2 = \frac{|\textbf{q}|^3}{k}\sqrt{m^2+\frac{|\textbf{p}|^2}{c^2}}.
\]
From this last equality we obtain that the vectors $\nabla H$ and $\nabla L$ are linearly dependent in $\R^4$ if and only if
\[
\textbf{Jp} = \pm \frac{k^{\frac{1}{2}}}{|\textbf{q}|^{\frac{3}{2}}}\left(m^2+\frac{|\textbf{p}|^2}{c^2}\right)^{\frac{1}{4}}\textbf{q}.
\]
The subset 
\[
\mathcal C = \left\{(\textbf{q},\textbf{p})\in \Omega : \textbf{Jp} = \pm \frac{k^{\frac{1}{2}}}{|\textbf{q}|^{\frac{3}{2}}}\left(m^2+\frac{|\textbf{p}|^2}{c^2}\right)^{\frac{1}{4}}\textbf{q}\right\}
\]
is closed in $\Omega$ and, therefore, the system is integrable on the open subset $\Omega^*=\Omega\setminus\mathcal C$. Moreover, we notice that the points in $\mathcal C$ satisfy $q\perp p$ and 
\[
|\textbf{q}||\textbf{p}|^2 = k\sqrt{m^2+\frac{|\textbf{p}|^2}{c^2}}.
\]
In particular, the set $\mathcal{C}$ is formed by circular solutions.

\begin{proof}[Proof of Proposition~\ref{integrabilidad}]
Notice that $|\ell|=|L(\textbf{q},\textbf{p})|\leq |\textbf{p}||\textbf{q}|$ and the equality holds if and only if $p\perp q$. Let us prove the sufficient implication assuming $\mathcal N_{(\ell,h)}\neq \emptyset$ and taking $(\textbf{q},\textbf{p})\in\mathcal N_{(\ell,h)}.$ Assume $\ell\neq 0$, using the identity $H(\textbf{q},\textbf{p})=h$ we have
\[
h\geq c^2\sqrt{m^2+\frac{|\textbf{p}|^2}{c^2}}-\frac{k}{|\ell|}|\textbf{p}|-mc^2 \geq \mu(\ell)
\]
where $\mu(\ell)=\inf_{[0,\infty[}\psi_{\ell}(x)$ and 
\[
\psi_{\ell}(x) = c^2\sqrt{m^2+\frac{x^2}{c^2}}-\frac{k}{|\ell|}x-mc^2.
\]
The infimum $\mu(\ell)$ is finite when $c\geq \frac{k}{|\ell|}$ and a straight computation shows that $\mu(\ell)=mc^2(\sigma-1)$ in that case. Consequently, from $h\geq \mu(\ell)$ we have $h+mc^2(1-\sigma)\geq 0$. Moreover, the infimum is achieved if and only if $|\ell|>\frac{k}{c}$. 

The boundary case when the infimum is achieved corresponds precisely to the points of $\mathcal C$. Indeed, the infimum of $\psi_{\ell}(|\textbf{p}|)$ is taken when 
\[
|\ell||\textbf{p}| = k\sqrt{m^2+\frac{|\textbf{p}|^2}{c^2}},
\]
which is satisfied by the points of $\mathcal C$ since $|\ell|=|\textbf{q}||\textbf{p}|$.

Let us show the necessity of the condition. Let us take a point $(\textbf{q},\textbf{p})\in\Omega$, $\textbf{q}=\lambda\textbf{e}_1$, $\textbf{p}=\mu \textbf{e}_2$, $\lambda>0$, $\mu\in\mathbb{R}$. Therefore $L(\textbf{q},\textbf{p})=\lambda\mu=\ell$ and $H(\textbf{q},\textbf{p})=\psi_{\ell}(|\mu|)=h$. If $|\ell|\neq 0$, choosing $(\ell,h)$ accordingly to the statement implies that a positive value $|\mu|$ can be chosen such that the equality holds. Once $\mu$ is fixed, we obtain $\lambda=\ell/\mu$. The case $|\ell|=0$ follows from taking $\textbf{q}=\lambda\textbf{e}_1$ and $\textbf{p}=\mu\textbf{e}_1$.
\end{proof}

\section{Dependence of a third constant of motion}\label{sec:3integral}
Let us consider $(h,\ell)\in\mathscr{E}$. It is a well-known result that solutions for these constants of motion are periodic or quasi-periodic. As we showed in Lemma~\ref{commensurable} periodicity happens when $\Theta$ is commensurable with $\pi$, whereas quasi-periodic solutions appear in the other case. Following Landau~\cite{Landau} or Section~\ref{sec:clairaut} with $V(r)=\frac{k}{r}$, one find for the relativistic Coulomb problem
\[
\Theta = \sqrt{1-\frac{k^2}{\ell^2c^2}}.
\]
In particular, $\Theta$ depends on the angular momentum $\ell$.

Let us fix $(\ell,h)\in\mathscr{E}$ such that $\Theta\notin\pi\mathbb{Q}$. By Lemma~\ref{commensurable} the solution $\textbf{x}(t)$ is dense in the torus described by the first integrals $H=h$ and $L=\ell$. In consequence, $F$ is constant in the set $\mathcal{N}_{(\ell,h)}$. Since the values of $\ell$ such that $\Theta\notin\pi\mathbb{Q}$ are dense in $\mathscr{E}$, we have that $F$ is constant in each $(h,\ell)\in\mathscr{E}$. Consequently, $F$ is a continuous function of $(h,\ell)$.

\section{The relativistic Runge-Lenz vector}\label{sec:runge}

In this section we describe the relativistic Runge-Lenz vector with the aim of proving Proposition~\ref{prop:runge}. Similar computations can be found in \cite{MunPav2006} for a Hamilton-like vector. From the definition of the Lorentz factor in \eqref{gamma}, notice that $\gamma(\textbf{p})\geq 1$ for any $\textbf{p}\in\mathbb{R}^2$. Invoking the identity $\left<\textbf{u}\times \textbf{v},\textbf{w}\right>=\left<\textbf{u},\textbf{v}\times \textbf{w}\right>$ together with the definition of the Runge-Lenz vector $\textbf{R}$ in \eqref{runge} we obtain
\[
\left<\textbf{R},\textbf{q}\right> = -mk\gamma|\textbf{q}|+|\textbf{q}\times\textbf{p}|^2,
\]
which implies the relativistic conic equation
\begin{equation}\label{cr}
    |\textbf{q}|+\frac{1}{mk\gamma}\left<\textbf{R},\textbf{q}\right> = \frac{1}{mk\gamma}|\textbf{q}\times\textbf{p}|^2,
\end{equation}
for any $(\textbf{q},\textbf{p})\in(\mathbb{R}^2\setminus\{0\})\times\mathbb{R}^2$. Let us consider the Hamiltonian system
\begin{equation}\label{hamiltonian-kepler}
\frac{d\textbf{q}}{dt}=\frac{\textbf{p}}{m\gamma} , \ \ \frac{d\textbf{p}}{dt} = -k \frac{\textbf{q}}{|\textbf{q}|^3}.
\end{equation}
Unlike the Newtonian case, the relativistic Runge-Lenz vector is not a first integral of motion. However an explicit expression for $\mathbf{R}$ can be obtained and from it we can deduce the expression of the orbits. 

We compute the variation of the relativistic Runge-Lenz vector using
\begin{align*}
\frac{d}{dt}\left(\frac{\textbf{q}}{|\textbf{q}|}\right)&=\frac{\dot{\textbf{q}}}{|\textbf{q}|}-\frac{\textbf{q}}{|\textbf{q}|^3}\left<\textbf{q},\dot{\textbf{q}}\right>=\frac{1}{|\textbf{q}|^3}(\textbf{q}\times\dot{\textbf{q}})\times\textbf{q}\\
&=\frac{1}{m\gamma}(\textbf{q}\times\textbf{p})\times \frac{\textbf{q}}{|\textbf{q}|^3}=-\frac{1}{mk\gamma}(\textbf{q}\times\textbf{p})\times \frac{d\textbf{p}}{dt}.
\end{align*}
Since $\ell = |\textbf{q}\times\textbf{p}|$ is first integral, $\Bell = \ell \textbf{e}_3$. Due to the reversibility of the system, it is not restrictive to consider the curve $(\textbf{q}(t),\textbf{p}(t))$ being positive oriented. That is, $\textbf{q}\times\textbf{p}=\ell \textbf{e}_3$ with $\ell\geq 0$. Thus,
\[
\frac{d}{dt}\left(\frac{k\textbf{q}}{|\textbf{q}|}\right) = -\frac{1}{m\gamma} \Bell\times\frac{d\textbf{p}}{dt}=-\frac{1}{m\gamma}\frac{d}{dt}(\Bell\times\textbf{p}).
\]
And so we deduce
\begin{equation}\label{vR}
    \frac{d\textbf{R}}{dt} = - mk \frac{d\gamma}{dt} \frac{\textbf{q}}{|\textbf{q}|}.
\end{equation}
The vector $\textbf{R}$ is confined in the plane $\mathbb{R}^2\times\{0\}$. We consider the mobile reference system $\{\Balpha(t),\Bbeta(t)\}$ given by
\[
\Balpha := \frac{\textbf{q}}{|\textbf{q}|},\ \ \Bbeta := \textbf{J}\Balpha.
\]
We can write $\Balpha = (\cos\theta,\sin\theta)$, where $\theta=\theta(t)$ is the argument function of $\Balpha$. In this reference system the relativistic Runge-Lenz vector is written as
\[
\textbf{R} = R_{\alpha}\Balpha + R_{\beta}\Bbeta,
\]
with $R_{\alpha} = \left<\textbf{R},\Balpha\right>$ and $R_{\beta} = \left<\textbf{R},\Bbeta\right>$. Direct time derivation gives $\dot{\Balpha}=\dot{\theta}\Bbeta$ and $\dot{\Bbeta}=-\dot{\theta}\Balpha$.
Taking time derivatives on the previous equality and using \eqref{vR} we obtain
\[
(\dot{R}_{\alpha}-\dot{\theta}R_{\beta})\Balpha + (R_{\alpha}\dot{\theta}+\dot{R}_{\beta})\Bbeta = -mk\frac{d\gamma}{dt}\Balpha,
\]
and we arrive to the system
\[
\begin{cases}
    \dot{R}_{\alpha}-\dot{\theta}R_{\beta} = -mk\dfrac{d\gamma}{dt},\\
    \dot{R}_{\beta}+ \dot{\theta}R_{\alpha} = 0.
\end{cases}
\]
From now on let us assume $\ell>0$ so that $\theta=\theta(t)$ is a diffeomorphism between certain intervals. Taking $\theta$ as independent variable, $\textbf{R}=\textbf{R}(\theta)$ and denoting with $'$ the differentiation with respect to $\theta$, we obtain
\begin{equation}\label{sR}
\begin{cases}
    R_{\alpha}'-R_{\beta} = -mk\gamma'(\theta),\\
    R_{\beta}'+R_{\alpha}=0.
\end{cases}
\end{equation}
From the definition of $\textbf{R}$,
\[
R_{\alpha} = \left<\textbf{R},\Balpha\right>=-mk\gamma + \left<\textbf{p}\wedge(\textbf{q}\wedge\textbf{p}),\frac{\textbf{q}}{|\textbf{q}|}\right>=
-mk\gamma + \frac{1}{|\textbf{q}|}|\textbf{q}\wedge\textbf{p}|^2 = -mk\gamma + \frac{\ell^2}{|\textbf{q}|},
\]
and from the conservation of energy $h=(\gamma-1)mc^2-\frac{k}{|\textbf{q}|}$, so we can bind $R_{\alpha}$ and $\gamma$ with the expression
\begin{equation}\label{cRG}
    R_{\alpha}=-\frac{\ell^2}{k}(h+mc^2) + \frac{m}{k}(\ell^2c^2-k^2)\gamma.
\end{equation}
From the previous equality we obtain
\[
\gamma ' = \frac{k}{m(\ell^2c^2-k^2)} R'_{\alpha},
\]
and substituting in~\eqref{sR} we arrive to the linear system
\begin{equation}
\begin{cases}
    R'_{\alpha}=\sigma^2 R_{\beta},\\
    R'_{\beta}=-R_{\alpha}.
\end{cases}
\end{equation}
where $\sigma^2=1-\dfrac{k^2}{\ell^2c^2}$. The previous discussion proves Proposition~\ref{prop:runge}.

\section{A description of the motion at the collision}
In this section we prove Proposition~\ref{prop:asymptotic}. The motions of the relativistic Coulomb problem for angular momentum $|\ell|<k/c$ collide with the singularity at the origin as shown, for instance, in~\cite{Landau}. Let us set the instant of collision at $t=0$ and approaching it from $t>0$. Let us consider the map
\[
\textbf{w}:(\mathbb{R}^2\setminus\{0\})\times\mathbb{R}^2 \rightarrow \mathbb{R}^2
\]
defined by $\textbf{w}(\textbf{q},\textbf{p})=(\left<\textbf{q},\textbf{p}\right>,\left<\textbf{q}\wedge\textbf{p},\textbf{e}_3\right>)$ and let us denote by $w_1$ and $w_2$ the first and second component of $\textbf{w}$, respectively. We notice that the map $\textbf{w}$ is polynomial and the map 
\[
(\textbf{q},\textbf{p})\in(\mathbb{R}^2\setminus\{0\}\times\mathbb{R}^2) \mapsto (\textbf{q},\textbf{w})\in (\mathbb{R}^2\setminus\{0\}\times\mathbb{R}^2)
\]
is a diffeomorphism with inverse given by
\[
\textbf{p}= w_1\frac{\textbf{q}}{|\textbf{q}|}+w_2\frac{\textbf{J}\textbf{q}}{|\textbf{q}|}.
\]
It is an straightforward computation to verify the identity
\[
|\textbf{w}|^2=|\textbf{q}|^2|\textbf{p}|^2.
\]
Using the previous on the definition of $\gamma$ and the energy, we obtain the identities
\[
\gamma = \frac{1}{|\textbf{q}|^2}\sqrt{m^2|\textbf{q}|^2+\frac{|\textbf{w}|^2}{c^2}},
\]
and
\[
(h+mc^2)|\textbf{q}|=c^2\sqrt{m^2|\textbf{q}|^2+\frac{|\textbf{w}|^2}{c^2}}-k.
\]
The set
\[
\mathcal{M}:=\{(\textbf{q},\textbf{w},h)\in \mathbb{R}^2\times\mathbb{R}^2\times \mathbb{R} : (h+mc^2)^2|\textbf{q}|^2=\bigl[c^2\sqrt{m^2|\textbf{q}|^2+\tfrac{|\textbf{w}|^2}{c^2}}-k\bigr]^2\}
\]
is a smooth manifold of dimension four that is invariant under the flow
\begin{equation}\label{flow}
\begin{cases}
    \dfrac{d\textbf{q}}{dt} = \frac{1}{\sqrt{m^2|\textbf{q}|^2+\frac{|\textbf{w}|^2}{c^2}}}\left(w_1\frac{\textbf{q}}{|\textbf{q}|}+w_2\frac{J\textbf{q}}{|\textbf{q}|}\right),\\
    \dfrac{dw_1}{dt}=h+mc^2-\frac{m^2c^2|\textbf{q}|}{\sqrt{m^2|\textbf{q}|^2+\frac{|\textbf{w}|^2}{c^2}}},\\
    \dfrac{dw_2}{dt}=\dfrac{dh}{dt}=0.
\end{cases}
\end{equation}
This vector field is discontinuous at $\textbf{q}=0$ but it is bounded. Indeed, letting $\textbf{q}=re^{i\theta}$,
\begin{equation}\label{flow-polar}
\begin{cases}
    \dfrac{dr}{dt}=\dfrac{w_1}{\sqrt{m^2r^2+\frac{|\textbf{w}|^2}{c^2}}},\\
    \dfrac{d\theta}{dt}=\dfrac{\omega_2}{r\sqrt{m^2r^2+\frac{|\textbf{w}|^2}{c^2}}}.
\end{cases}
\end{equation}
Notice that the first equation is smooth at $r=0$, whereas the second equation corresponding to the argument is singular. Assume $r(t)\rightarrow 0$ as $t\rightarrow 0$ with $t>0$. Then, from the second equation of~\eqref{flow},
\[
\frac{d w_1}{dt}=h+mc^2+o(1),
\]
implying
\[
w_1(t)=w_{10}+(h+mc^2)t+o(t).
\]
Assume that $w_{10}\neq 0$. Using the previous equality in the first equation of~\eqref{flow-polar} we obtain
\[
\frac{dr}{dt}=\frac{c^2w_{10}}{k}+o(1),
\]
and so
\[
r(t)=\frac{c^2w_{10}}{k}t+o(t).
\]
Consequently,
\[
\frac{1}{r(t)}=\frac{k}{c^2w_{10}t}+o(1/t).
\]
Using the previous equality in the second equation of~\eqref{flow-polar} we obtain
\[
\frac{d\theta}{dt}=\frac{w_2}{w_{10}t}+o(1/t)
\]
and, therefore,
\[
\theta(t)=\theta_0+\frac{w_2}{w_{10}}\ln(t)+o(|\ln(t)|).
\]
Observe that the condition $w_{10}\neq 0$ is not restrictive. Indeed, if $w_{10}=0$ then $w_1(t)=\left<\textbf{q}(t),\textbf{p}(t)\right>\rightarrow 0$ as $t\rightarrow 0$. In particular, $|\textbf{w}(t)|\rightarrow \ell$ as $t\rightarrow 0$. Besides, from the conservation of energy, $|\textbf{w}(t)|\rightarrow \frac{k}{c}$ as $t\rightarrow 0$ so we have $|\ell| = \frac{k}{c}$ at the collision, which contradicts $|\ell|<k/c$.

\section*{Acknowledgements}
This work was financially supported by the Ministerio de Ciencia, Innovación y Universidades / Agencia Estatal de Investigación grant numbers PID2021-128418NA-I00 and PID2023-146424NB-I00; and by the Generalitat de Catalunya grant 2021 SGR 00113. D.R. is a Serra Húnter Fellow.

\end{document}